\DeclareMathOperator*{\argmin}{arg\,min}
\theoremstyle{thmstyleone}%
\newtheorem{theorem}{Theorem}
\newtheorem{proposition}[theorem]{Proposition}%
\newtheorem{lemma}[theorem]{Lemma}
\theoremstyle{thmstyletwo}%
\newtheorem{remark}{Remark}%
\newtheorem{definition}{Definition}%
\theoremstyle{thmstylethree}%
\begin{document}

\title[Article Title]{Efficient Function Approximation Under Heteroskedastic Noise}


\author[1]{\fnm{Yuji} \sur{Nakatsukasa}}\email{yuji.nakatsukasa@maths.ox.ac.uk}

\author*[1]{\fnm{Yifu} \sur{Zhang}}\email{yifu.zhang@univ.ox.ac.uk}

\affil*[1]{\orgdiv{Mathematics Institute}, \orgname{University of Oxford}, \orgaddress{\city{Oxford}, \postcode{OX2 6CG},\country{UK}}}


\abstract{Approximating a function $f(x)$ on $[-1,1]$ based on $N+1$ samples is a classical problem in numerical analysis. If the samples come with heteroskedastic noise depending on $x$ of variance $\sigma(x)^2$, an $O(N\log N)$ algorithm for this problem has not yet been found in the current literature. In this paper, we propose a method called HeteroChebtrunc, adapted from an algorithm named NoisyChebtrunc. Using techniques in high-dimensional probability, we show that with high probability, HeteroChebtrunc achieves a tighter infinity-norm error bound than NoisyChebtrunc under heteroskedastic noise. This algorithm runs in $O(N+\hat{N}\log \hat{N})$ operations, where $\hat{N}\ll N$ is a chosen parameter. While investigating the properties of HeteroChebtrunc, we also derive a high-probability non-asymptotic relative error bound on the sample variance estimator for subgaussian variables, which is potentially another result of broader interest. We provide numerical experiments to demonstrate the improved uniform error of our algorithm.}



\maketitle

\section{Introduction}
\setcounter{page}{1}

A central theme in approximation theory is polynomial approximation of a function $f(x)$ on $[-1,1]$. One common approach is Lagrange interpolation, which first samples $f$ at $ \{x_i\}_{i=0}^N $, for some $N$ and $x_i$, then finds the unique degree $N$ polynomial $p_N$ such that $p_N(x_i) = f(x_i)$. When the function can be sampled precisely and the sample points chosen freely, the interpolation based on Chebyshev points, that is, $x_i = \cos(i\pi/N)$, is one of the most established methods. Chebyshev interpolation is known to be near-best and achieves a spectral rate of convergence \cite[Chap. 7, 8, 16]{ATAP}, e.g. exponential if $f$ is analytic and algebraic if $f$ is differentiable. However, when sampling comes with noise as is common in real-world applications, one would need to utilise a method that still guarantees fast convergence and hopefully reduces such noise. 

The setting of our problem is to approximate a function $f:[-1,1] \rightarrow \mathbb{R}$ based on sampling in $x \in [-1,1]$ given by

\[y = f(x) + \epsilon_x,\]
where $\epsilon_x$ follows some error distribution, potentially dependent on $x$. One typical example would be $\epsilon_x \sim N(0, \sigma(x)^2)$, the Gaussian distribution with mean $ 0 $ and variance $\sigma(x)^2$ for some $\sigma(x)\geq 0$. We assume mean of $\epsilon_x$ is 0, as otherwise the mean becomes part of $f(x)$. We only consider $[-1,1]$ as any interval $[a,b]$ can be transformed to $[-1,1]$ via a standard affine mapping. 

The ideal solution for this problem is to find $p^* = \argmin \| f - p \|_\infty$, the best polynomial approximation to $f$. However, in a noisy setting this is not possible, so instead our goal is to obtain $p_n$, a degree $n$ polynomial which reduces the uniform error $\|f-p_n\|_\infty$ as much as possible.

One powerful algorithm to obtain such an approximant is NoisyChebtrunc, proposed by Matsuda and Nakatsukasa \cite{yuji} in 2024. In short, given a computational budget of $N+1$ allowed samples, it first interpolates $f$ at $N+1$ Chebyshev points to yield $\hat{p}_N$, a degree $N$ interpolant of $f$, then truncates the Chebyshev extension of $\hat{p}_N$ to an appropriate degree $n$, where $n<N$ (usually $n\ll N$), using a statistical criterion\footnote{See the discussion following Algorithm \ref{alg:noisychebtrunc}}, Mallow's $C_p$, which produces the final approximant $p_n$. It has several advantages: first of all, it is very computationally efficient, requiring only $O(N\log N)$ operations. Secondly, it is numerically stable, as inherited from Chebyshev interpolation. Finally, it also has very attractive convergence properties: assuming the noise\footnote{Here one also assumes noise is subgaussian/subexponential, which is explained in later sections.} is i.i.d. with variance $\sigma^2$, it has a spectral rate of convergence until error reaches $O(\sigma\sqrt{\frac{n}{N}})$, then error continues to decay as $O(\frac{1}{\sqrt{N}})$. We give more details of NoisyChebtrunc in Section \ref{sec:chebandnoisy}.

In this paper, we propose an extension to NoisyChebtrunc, called HeteroChebtrunc, designed to tackle \emph{heteroskedastic} independent noise. Heteroskedasticity refers to the scenario where the noise variance is no longer uniform, which arises in many applications, e.g. environmental risk mapping, financial volatility forecasting, and others \cite{envi-risk,volatility-forecast}. NoisyChebtrunc is designed to handle i.i.d., and hence homoskedastic (uniform variance) noise, and under heteroskedastic assumptions, NoisyChebtrunc will have error proportional to the largest noise divided by $\sqrt{N}$ \cite{yuji}, which is not ideal. In HeteroChebtrunc, we interpolate at $\hat{N} \ll N$ Chebyshev points, and employ a weighted sampling scheme. This allows us to have an error which scales proportional to $\frac{\|\bm{\sigma}\|_2}{\sqrt{\hat{N}}}$ rather than $\|\bm{\sigma}\|_\infty$, where $\bm{\sigma} = (\sigma(x_i))_{i=0}^{\hat{N}}$. This can be a significant improvement if the noise is heteroskedastic as we demonstrate in our experiments.

In addition to the improved accuracy, HeteroChebtrunc also reduces the time complexity of NoisyChebtrunc from $O(N\log N)$ to $O(N+\hat{N}\log \hat{N})$, and a significant difference can be observed when conducting numerical experiments. For example, using the authors' standard laptop, running 100 trials of NoisyChebtrunc with $N = 10^6$ took 51.3407 seconds, but HeteroChebtrunc (with $\hat{N} = 10^3$) took only $1.7851$ seconds. 

Compared with existing methods, our algorithm has many desirable properties, some of the key advantages include:

\begin{enumerate}
	\item By utilising Chebyshev interpolation, HeteroChebtrunc inherits its computational efficiency and numerical stability, which cannot be guaranteed in other methods, e.g. if sample points are equispaced \cite{yuji}. In fact, our algorithm is faster than NoisyChebtrunc's $O(N\log N)$, although this is at the cost of an additional requirement: repeated sampling at the same $x$ yields independent evaluations of noise $\epsilon_x$.
	\item Compared with NoisyChebtrunc, not only does HeteroChebtrunc address heteroskedasticity and achieve smaller uniform error, as an estimator it also has a smaller variance than NoisyChebtrunc, as demonstrated in Figure \ref{fig:histogramhetero}. This means that it produces tighter confidence intervals.
	\item Compared with estimators from nonparametric regression, one common issue with statistical methods like local regression and penalisation spline is that they have non-uniform variance under heteroskedasticity, that is, they typically have larger error on noisier regions. We observe the same phenomenon with NoisyChebtrunc in Figure \ref{fig:f-pn}. However, by redistributing the noise evenly across sample points, our estimator significantly reduces this effect, allowing for accurate estimation even at $x$ where $\sigma(x)$ is large. This is also why we are able to achieve a better uniform error, as traditional methods will inevitably have a larger pointwise error at noisier $x$.
\end{enumerate}

\subsection{Motivation}

There are two key ideas behind HeteroChebtrunc, the first of which is \emph{weighted sampling}. In order to motivate this idea, let us assume that the function $\sigma(x)$ is known. We introduce a weighted sampling scheme in Section \ref{sec:noisy-hetero}, which instead of sampling at the $N+1$ Chebyshev points, we choose $\hat{N} \ll N$ and sample at the $\{x_i\}_{i=0}^{\hat{N}}$ Chebyshev points. This allows us to sample multiple times at each of the $x_i$, which is a simple but powerful idea as we later demonstrate. Specifically, we propose Algorithm \ref{alg:noisychebtrunchetero}, which addresses heteroskedasticity by sampling $k_i$ times at each of the $x_i$ Chebyshev points, with $k_i$ assigned based on noise levels at each node. Intuitively, one takes more samples, i.e. chooses larger $k_i$, at noisier nodes, and fewer at cleaner nodes, so that by taking $y_i$ as the average of the $k_i$ samples at $x_i$, the noise level becomes more uniform and the maximum noise is reduced. We then find the Chebyshev interpolant $\hat{p}_{\hat{N}}$ through $\{(x_i,y_i)\}_{i=0}^{\hat{N}}$ and truncate using Mallow's $C_p$ as in NoisyChebtrunc. Given the noise level at each node $ \bm{\sigma} = (\sigma_i)$, where $\sigma_i^2$ is the variance of noise at $x_i$, $i= 0,1,...,\hat{N}$, we show that Algorithm \ref{alg:noisychebtrunchetero} improve the uniform error $\|f-p_n\|_\infty$ of NoisyChebtrunc\footnote{That is to say, if an approximant from NoisyChebtrunc has uniform error $\|f-p_n\|_\infty$, then we expect HeteroChebtrunc to have error $\frac{\|\bm{\sigma}\|_2}{\|\bm{\sigma}\|_\infty \sqrt{\hat{N}}}\|f-p_n\|_\infty$.} by a factor of $\frac{\|\bm{\sigma}\|_2}{\|\bm{\sigma}\|_\infty \sqrt{\hat{N}}} \leq 1$ for large $N$. This algorithm inherits most of the attractive properties of NoisyChebtrunc, and improves its convergence under heteroskedastic noise. 

The second key idea is \emph{pre-sampling}, which we introduce in order to resolve Algorithm \ref{alg:noisychebtrunchetero}'s reliance on knowledge of $\sigma(x)$. To be specific, we allocate a portion $r$ of the sampling budget to pre-samples. That is, we first apply $rN$ samples\footnote{$rN$ assumed to be an integer.}, and allocate them uniformly at each of the $\hat{N}+1$ Chebyshev points $x_i$ to estimate their noise variances $\sigma_i^2$ using the sample variance estimator, then apply weighted sampling using the rest of $(1-r)N$ samples as in Algorithm \ref{alg:noisychebtrunchetero} using our estimations of $\sigma_i$. 

This gives an intuitive description of our main algorithm HeteroChebtrunc: choosing $\hat{N} +1 \ll N$ Chebyshev points and $0<r<1$, we 

\begin{enumerate}
	\item Pre-sample to estimate the noise variance $\sigma_i^2$ at each $x_i$ using the sample variance estimator;
	\item Apply weighted sampling using the remaining $(1-r)N$ samples, based on our estimation of $\sigma_i^2$ from the pre-sampling; we sample more at noisier nodes and less at cleaner nodes, and take $y_i$ as the average of the $k_i$ samples at $x_i$;
	\item Find the unique Chebyshev interpolant $\hat{p}_{\hat{N}}$ through $\{(x_i, y_i)\}_{i=0}^{\hat{N}}$;
	\item Truncate to degree $n$ using Mallow's $C_p$ to yield output $p_n$.
\end{enumerate}

We give the specific details of HeteroChebtrunc and those of Algorithm \ref{alg:noisychebtrunchetero} in Section \ref{sec:noisy-unknown} and Section \ref{sec:noisy-hetero} respectively. We summarise the properties of these three algorithms in Table \ref{tab:compare} below for direct comparison:

\begin{table}[h!]
	\caption{Comparison of the three algorihms.}
	\label{tab:compare}
        \centering
	\begin{tabular}{|c|c|c|c|}
			\hline
			& Time Complexity & Knowledge of $\sigma(x)$? & $\|f-p_n\|_\infty$ \\
			\hline
			NoisyChebtrunc & $O(N\log N)$ & Not Required & $O(\|\bm{\sigma}\|_\infty\sqrt{\frac{n}{N}})$  \\
			
			Algorithm \ref{alg:noisychebtrunchetero} & $O(N+\hat{N}\log \hat{N})$ & Required & $O(\frac{\|\bm{\sigma}\|_2}{\sqrt{\hat{N}}}\sqrt{\frac{n}{N}})$ \\
			
			HeteroChebtrunc & $O(N+\hat{N}\log \hat{N})$ & Not Required & $O(\frac{\|\bm{\sigma}\|_2}{\sqrt{(1-r)\hat{N}}}\sqrt{\frac{n}{N}})$ \\
			\hline
	\end{tabular}
\end{table}

When developing this algorithm, we also derive a non-asymptotic bound on the sample variance estimator $S^2$. We use Bernstein-type inequalities, which is a class of concentration inequalities commonly used in high-dimensional probability, and bound the absolute error $|S^2-\sigma^2|$ with high probability when the sample random variables are subgaussian. Using this result, we prove that this algorithm improves the error of NoisyChebtrunc by a factor of $ \frac{c}{\sqrt{1-r}}\frac{\|\bm{\sigma}\|_2}{\|\bm{\sigma}\|_\infty \sqrt{\hat{N}}} $, $c>1$ is a constant that can be made arbitrarily close to $ 1$ given $N$ large enough. This can be a significant improvement as the following experiments demonstrate: we approximate the Runge function $f(x) = \frac{1}{1+25x^2}$ under the burst noise $\sigma(x) = \begin{cases}
	1 & \text{if } x \in [0,0.1]\\
	0.00001 & \text{otherwise} 
\end{cases}$, using both the original NoisyChebtrunc and our improved algorithm HeteroChebtrunc\footnote{Unless stated otherwise, we will always use red, blue, green in our figures for NoisyChebtrunc, Algorithm \ref{alg:noisychebtrunchetero}, and HeteroChebtrunc, respectively.}.

\begin{figure}[H]
	\centering
	\includegraphics[width=1\linewidth]{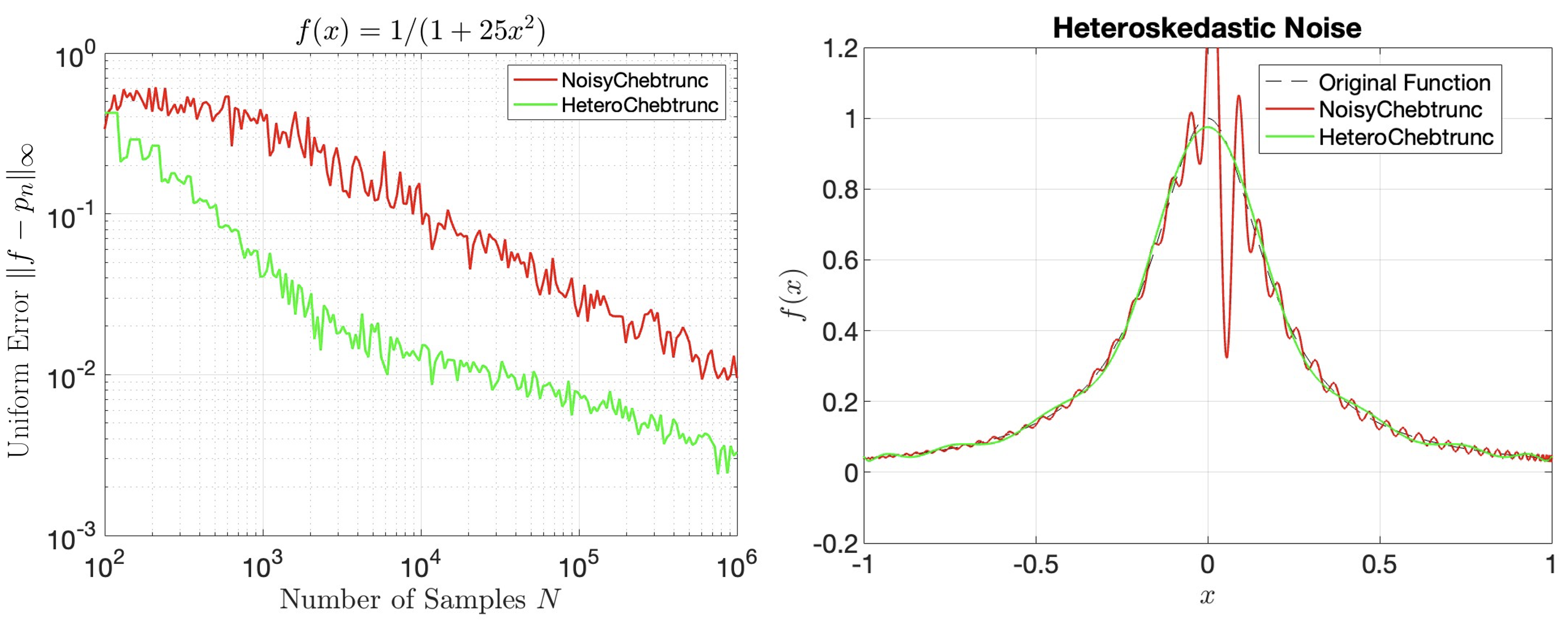}
	\caption{Improved Convergence of HeteroChebtrunc. Left: Uniform Error $\|f-p_n\|_\infty$ v.s. Sample budget $N$. We take 200 logarithmically spaced points from $N = 10^2$ to $N=10^6$, and each data point is averaged from 50 trials. HeteroChebtrunc significantly improves uniform error comparing with NoisyChebtrunc. Right: Approximant from NoisyChebtrunc (red) and HeteroChebtrunc (green) for $N = 10^3$. }
	\label{fig:start}
\end{figure}

We see a significant improvement in uniform error from our algorithm as $N$ becomes moderately large. For $N>10^3$, HeteroChebtrunc has uniform error close to an order of magnitude smaller than NoisyChebtrunc. On the right, we plot an example approximant from NoisyChebtrunc and HeteroChebtrunc for $N = 10^3$. Even with this moderately small sample budget, HeteroChebtrunc consistently produces a good approximant, and the approximant is not noticeably worse in the region $[0,0.1]$ where noise is large, whereas NoisyChebtrunc may sometimes produce a problematic estimate, particularly in the noisy region $[0,0.1]$. This is another key advantage of HeteroChebtrunc compared to other methods as we later discuss in detail in Figure \ref{fig:f-pn}.

We also include other numerical experiments in this paper to demonstrate the reduction in uniform error from our algorithm, and that our theoretical results align with the experiments reasonably well. We highlight here Figure \ref{fig:500trials}, which illustrates that HeteroChebtrunc has uniform error $O(\frac{\|\bm{\sigma}\|_2}{\sqrt{1-r}}\sqrt{\frac{n}{N}})$, as our theoretical analysis in Section \ref{sec:noisy-unknown} predicts.

Finally, we give a formal definition of heteroskedasticity used in this paper:

\begin{definition}\label{def:hetero}
	Given the sampling problem $y=f(x) + \epsilon_x$, where $\{\epsilon_x:x\in [-1,1]\}$ is a family of zero-mean noise random variable such that $\text{Var}(\epsilon) = \sigma(x)^2$ for some function $\sigma(x)\geq 0$. We say that the noise is \emph{homoskedastic} if $\sigma(x) = \sigma_0$ for some constant $\sigma_0$, and \emph{heteroskedastic} otherwise. In this paper, unless otherwise specified, we also assume $\epsilon_x$ and $\epsilon_y$ are independent for $x\neq y$, and different samples taken at the sample $x$ are independent.
\end{definition}

\textit{Notation.} In this paper, the target function is $f$ and the total number of samples budget allowed is $N+1$. The observations are $\{(x_i,y_i)\}$, where $x_i$ are Chebyshev points and

\[y_i = f(x_i) + \epsilon_{x_i},\]
where the noise $\epsilon_{x_i}$ are independent with mean 0 and variance $\sigma(x_i)^2$. $\sigma(x) \geq 0$ is some function in $x\in [-1,1]$. In discussions, we refer to $\sigma_i = \sigma(x_i)$ as the noise level at $x_i$. $\hat{p}_N$ is the interpolant, and $p_n$ is the degree $n$ truncation of $\hat{p}_N$. Vectors are denoted by boldface letters, e.g. $\bm{x} = (x_0, x_1,...,x_N)^T$, and $\| \cdot\|_p$ is the $l_p$ norm, $1\leq p\leq \infty$. $\mathbb{P}$, $\mathbb{E}$ denote probability and expectation, respectively.

$S^2 = \frac{1}{m-1} \sum_{i=1}^{m} (X_i - \bar{X})^2$ denotes the unbiased sample variance estimator, and $\bar{X} = \frac{1}{m}\sum_{i=1}^{m}X_i$ is the mean of i.i.d. samples $X_i$ of a random variable $X$. 

We say $f \ll g$ if $f(x) = O(g(x))$ and vice versa. $f(x) \asymp g(x)$ if $f\ll g$ and $g \ll f$. 

\section{Current Methods for Heteroskedastic noise}\label{sec:lit-review}

Heteroskedastic noise arises in datasets used in a variety of applications, such as in biological sequencing \cite{bio_seq}, radar signal processing \cite{radar}, option pricing \cite{finance}, pharmacokinetic models \cite{pharmacokinetic}, econometrics \cite{econometrics}, and many more. Motivated by this, modern methods has been developed to target heteroskedastic noise in many different scenarios. We discuss a few of such scenarios below.

\begin{enumerate}
	\item Principal Component Analysis (PCA): in PCA, when the sample noises $\epsilon_j$ are heteroskedastic i.e. not uniform, the estimates are known to be inconsistent. In order to address this, HeteroPCA is a state-of-the-art algorithm developed by Cai, Wu and Zhang \cite{PCA}, which uses an iterative approach to address the case of heteroskedastic noise. To summarise its overall idea, HeteroPCA iteratively updates the diagonal elements of its approximant by using a low rank approximation. We highlight an assumption often made in PCA, which is that the sample noises are heteroskedastic and independent \cite{PCA, deflated-PCA, missing-PCA}. HeteroPCA also does not assume that the noise variance is known, which is common in applications. Hence, when studying heteroskedastic noise in our context of function approximation, we develop HeteroChebtrunc that assumes independent heteroskedastic noise, and can adapt to unknown noise level via pre-sampling.
	\item In bootstrapping, when heteroskedastic noise is present, the statistics calculated based on the naïve bootstrap scheme are known to be inconsistent. A modified method, known as the weighted bootstrap, proposed by Wu is given in \cite{weighted_boots}, which instead of drawing resamples uniformly, it draws from a distribution based on heteroskedasticity. This can be shown to produce a heteroskedastic consistent estimator. 
	\item Nonparametric regression is the study of approximating an unknown function from noisy samples in statistics, which is closely related to our work. Under heteroskedasticity, the estimators from many traditional methods have non-uniform variances across different values of $x$. For example, the local polynomial regression estimator $\hat{f}(x)$ will have higher variance on $x$ values with large $\sigma(x)$. In statistics, this leads to larger confidence intervals for $\hat{f}$ \cite{wasserman}. In terms of function approximation, regions with larger variance in estimators tend to have bigger pointwise error, causing the uniform error to scale with the largest noise. This also makes it difficult to obtain an accurate approximation on regions with higher noise. 
	
	Adapting the statistical methods under heteroskedasticity is often difficult, particularly if the form of heteroskedasticity is unknown. Here we briefly outline some of the important methods:
	\begin{enumerate}
		\item In local regression, such as the Nadaraya-Waston kernel estimator \cite{wasserman}, one selects a bandwidth $h$ to control the size of local neighbourhood used to estimator $f(x)$ at each $x$, which is traditionally fixed across $x$. By using a variable bandwidth that adapts to noise variance, one can reduce the effect of heteroskedasticity in the data \cite{fan-and-gilbels,wasserman}.
		\item For dependent noise, different methods have been developed targeting the form of dependence. For example if the noise is spatially correlated, i.e. the covariance between $\epsilon_x$ and $\epsilon_y$ is a function $C(x,y)$ that depends on the $x-$coordinate $x$ and $y$, then an Analysis of Variance (ANOVA) based smoothing spline model can be applied to choose appropriate smoothing parameters that target the correlation between noise \cite{correlated-error}.
		\item When $\sigma(x)$ is not constant, a different approach is to use a least-squares approximant. One modern example is \cite{least_square}, where a hybrid approach that combines Christoffel sampling and least squares is shown to improve accuracy compared with least-squares approximant that ignores heteroskedasticity.
	\end{enumerate}
\end{enumerate}

In the next section, we will explore how viewing from the perspective of numerical analysis can shed new light on this topic, via the well-established theory of Chebyshev interpolation.

\section{Chebyshev Interpolation and NoisyChebtrunc}\label{sec:chebandnoisy}

Chebyshev interpolation is one of the most celebrated algorithms in approximation theory, which samples the target function $f $ at the $N+1$ Chebyshev points $\{x_i\}_{i=0}^N$ where
\begin{align*}
	x_i = \cos(\frac{i\pi}{N}) && i = 0, 1,...,N.
\end{align*}

It then produces an interpolant $\tilde{p}_{N}$ such that $\tilde{p}_{N}(x_i) = f(x_i) $ at each $x_i$. Computing a Chebyshev interpolant relies on the FFT, which enjoys stability as a unitary operation, and speed of just $O(N\log N)$ operations \cite{FFT-Cheb}. Chebyshev points also attain a Lebesgue constant that is within $O(1)$ of best possible, resulting in spectral convergence (exponential for analytic functions, and algebraic for differentiable functions) \cite{ATAP}.  It has guaranteed convergence for any $f$ absolutely continuous (or in general, any $f$ continuous and of bounded variation) as $N$ increases \cite{abs-cont}.

Recall our original problem of approximating a function $f \in C[-1,1]$ using a polynomial, where each evaluation of $f$ comes with noise with distribution $\epsilon_x$, potentially depending on $x$.  We are now in a position to introduce NoisyChebtrunc, which approximates $f$  on $[-1,1]$ in the following manner \cite{yuji}:

\begin{algorithm}
	\caption{NoisyChebtrunc: for approximation of $f$ under i.i.d. noise.}\label{alg:noisychebtrunc}
	\textbf{Input:} an oracle for sampling the noisy univariate function $f$; and $N+1$: the computational budget on the number of samples allowed.
	
	\textbf{Output:} A polynomial $p_n$ of degree $n( < N)$. 
	\begin{algorithmic}[1]
		\item Sample at $N+1$ Chebyshev points $ \{x_i\}_{i=0}^N $ to obtain $\{y_i\}_{i=0}^N$, the noisy data samples.
		\item Find degree $N$ interpolant $\hat{p}_N(x) = \sum_{i=0}^{N}c_iT_i(x)$ such that $\hat{p}_N(x_i) = y_i$, where $T_i$ are Chebyshev polynomials.
		\item Apply Mallow's $C_p$, which is an algorithm that selects a degree $n$ for truncation. 
		\item Truncate $\hat{p}_N$ at $n$, i.e. output $p_n(x) = \sum_{i=0}^{n}c_iT_i(x)$.
	\end{algorithmic}
\end{algorithm}

Here we briefly introduce Mallow's $C_p$. It is widely known that increasing the degree of the interpolant does not necessarily lead to better approximation in the presence of noise \cite{davenport}. Polynomial interpolants are known to suffer from overfitting, and therefore, a degree selection is required. Mallow's $C_p$ is one statistical criterion for this purpose. In statistics, Mallow's $C_p$ is an estimate of the prediction error in least-squares regression models, by essentially penalising the use of too many predictors to address overfitting \cite[Chap. 6]{stats}. NoisyChebtrunc selects a proper degree by selecting $n$ with minimal $C_p$. While the interaction between $C_p$ and Chebyshev interpolation in this context is yet to be fully explored, it has been shown that Mallow's $C_p$ can be relied on to produce a reasonable degree \cite{yuji}, so we will focus more on the approximation theory aspect of this problem rather than $C_p$.

To start our discussion of NoisyChebtrunc, we first need a characterisation of the type of noise. To this end, we define subgaussian and subexponential random variables:

\begin{definition} \cite{Wainwright_2019}[Chap. 2]
	A random variable $X$ is subgaussian with parameter $\sigma$ if 
	\[\mathbb{P}( |X|\geq t) \leq 2\exp(-\frac{t^2}{\sigma^2}).\]
	It can be shown that $\sigma^2$ is the variance of $X$.

	$X$ is subexponential with parameter $\nu, \alpha$ if
	\begin{equation*}
		\mathbb{P}( |X|\geq t) \leq 
		\begin{cases}
			2\exp(-\frac{t^2}{2\nu^2}), &\text{ for } 0\leq t \leq \frac{\nu^2}{\alpha}\\
			2\exp(-\frac{t}{2\alpha}) &\text{ for } t>\frac{\nu^2}{\alpha}
			
		\end{cases}.
	\end{equation*} 
\end{definition}

Let $r_n = f-p_n^*$, where $p_n^*$ is the best degree $n$ polynomial approximation of $f$. Under i.i.d. subgaussian noise of parameter $\sigma$, NoisyChebtrunc produces a polynomial approximant $p_n$ of order $ n $, and the uniform error $\|p_n-f\|_\infty$ is bounded above by $O(\sigma\sqrt{\frac{n}{N}} + \sqrt{n}\|r_n\|_\infty )$ with high probability. Under i.i.d. subexponential noise of parameter $(\nu, \alpha)$, $\|p_n-f\|_\infty$ is bounded above by $O\left( (\frac{\nu^2}{\alpha}\sqrt{\frac{n}{N}}+\sqrt{n}\|r_n\|_\infty)\log n\right)$ with high probability. A precise bound can be found in \cite{yuji}.

\begin{remark}
	Since $\|r_n\|_\infty$ is known to decay at a spectral rate as it is the error of the best polynomial approximant \cite{ATAP}, this means NoisyChebtrunc will converge at a spectral rate until the error reaches $O(\sigma \sqrt{\frac{n}{N}})$ in the subgaussian case, after which the first term will dominate and the error decays like $O(\frac{1}{\sqrt{N}})$. Overall, one expects the error to be $O(\sigma\sqrt{\frac{n}{N}})$.
\end{remark}

Now consider the heteroskedastic case as in Definition \ref{def:hetero}. For NoisyChebtrunc, if $N+1$ Chebyshev sample points $\{x_i\}_{i=0}^N$ is used, let $ \bm{\sigma} = (\sigma_0, ..., \sigma_N)$ be defined as $\sigma_i = \sigma(x_i)$, i.e. the error level at each Chebyshev point. The error will be proportional to $\frac{\|\bm{\sigma} \| _\infty}{\sqrt{N}}$ \cite{yuji}. This means in order to improve NoisyChebtrunc under heteroskedasticity, we need to reduce the maximum noise level at the Chebyshev points. In the next section, we show how a simple weighted sampling approach can achieve this reduction and lead to better uniform error.

\section{Heteroskedasticity and Weighted Sampling}\label{sec:noisy-hetero}

In this section, we propose a variant of NoisyChebtrunc called Algorithm \ref{alg:noisychebtrunchetero},  which aims to address the case where noise is heteroskedastic, i.e. when the variance $ \sigma(x)^2 $ of noise is not uniform. Crucially, this algorithm requires $ \sigma(x) $ as an input, a condition we will later relax in Algorithm \ref{alg:noisychebtrununknown}. We derive a uniform error bound for Algorithm \ref{alg:noisychebtrunchetero}, and show that it has a uniform error at least as good at NoisyChebtrunc and improves upon it when noise is heteroskedastic for $N$ sufficiently large.

\subsection{Redistribution of Noise}
To motivate this algorithm, we assume the form of heteroskedasticitiy as defined in Definition \ref{def:hetero}. Thus, it is possible to sample at this point $k_i$ times for some $k_i$, and average the samples taken to yield $y_i$. This reduces the noise level at $x_i$ from $\sigma(x_i)$ to $\frac{\sigma(x_i)}{\sqrt{k_i}}$. Naturally, one can consider the following simple variant of NoisyChebtrunc mentioned in \cite{yuji}:

Consider $N+1$ as the total number of samples allowed and $\hat{N}+1$ the number of Chebyshev sample points, where $\hat{N} \ll N$ and for simplicity we assume $k = (N+1)/(\hat{N}+1)$ to be integer:

\begin{enumerate}
	\item Take $k$ samples at each of the $\hat{N}+1$ Chebyshev points, and let $y_i$ be the average of the $k$ samples at $x_i$, $0\leq i\leq \hat{N}$.
	\item Perform Chebyshev interpolation using data $(x_i,y_i)$, and truncate degree using Mallow's $C_p$.
\end{enumerate}

It is not hard to see that it has similar performance to NoisyChebtrunc: if the noise level $\sigma(x) = \sigma_0$ is homoskedastic, then the noise at each data point is $\frac{\sigma_0}{\sqrt{k}}$, so this variant will have spectral rate of noise reduction until error reaches $O(\frac{\sigma_0}{\sqrt{k}}\sqrt{\frac{n}{\hat{N}}}) = O(\sigma_0\sqrt{\frac{n}{N}})$, which if we assume $n$ is fixed is same as the noise reduction of the original NoisyChebtrunc. 

Recall in the discussion ending the previous section, where in order to improve the uniform error, one needs to reduce the maximum error experienced at each Chebyshev point. Using the multiple sample approach, one way to achieve this objective is to sample the same point multiple times like the aforementioned variant, but instead of sampling the same number of times at each Chebyshev point, we sample based on the level of noise at each point: we assign more samples at noisier nodes and fewer samples at cleaner nodes. This way, we \emph{redistribute} the noise so that in the end the averaged samples have reduced maximum noise level. To make this precise, we propose the following:

\begin{algorithm}
	\caption{Special case of HeteroChebtrunc under known $\sigma(x)$.}
	\label{alg:noisychebtrunchetero} 
	\textbf{Input:} An oracle for sampling the noisy univariate function $f$; $N+1$: the computational budget on number of samples allowed; $\sigma(x)$: the noise level.
	
	\textbf{Output:} A polynomial $p_n$ of degree $n( < N)$.
	\begin{algorithmic}[1]
		\item Choose $\hat{N} \ll N$. Compute $\bm{\sigma} = (\sigma_0, \sigma_1,...,\sigma_{\hat{N}})$ for the $\hat{N}+1$ Chebyshev points $\{ x_i \}_{i=0}^{\hat{N}}$.
		\item For each $x_i$, $ 0\leq i \leq \hat{N}$,  sample $f(x_i)$ $k_i$ times, where $k_i\geq 1$ and $k_i$ is proportional to $\sigma_i^2$. Define $y_i$ as the average of the samples. 
		\item Interpolate at the points $(x_i, y_i)$ to find the degree $\hat{N}$ interpolant $\hat{p}_{\hat{N}}$. Truncate using Mallow's $C_p$ as in NoisyChebtrunc. 
	\end{algorithmic}
\end{algorithm}

Here we explain Step 2 in detail. At $x_i$, the averaged sample has variance $ \frac{\sigma^2_i}{{k_i}} $, and let $\bm{\bar{\sigma}} = (\frac{\sigma^2_1}{k_1}, ..., \frac{\sigma^2_{\hat{N}}}{{k_{\hat{N}}}} )$. In order to minimise $\max \frac{\sigma^2_i}{{k_i}}$, $k_i$ needs to be chosen such that $\frac{\sigma^2_i}{k_i}$ are roughly equal. This requires $k_i$ to be chosen proportionally to $\sigma_i^2$, that is, $k_i = (N+1)\frac{\sigma_i^2}{\sum \sigma_j^2}$ (ignoring rounding).

We briefly discuss the time complexity of this algorithm. The weighting calculations can be completed in $O(N) $, and by using FFT, Chebyshev interpolation on $\hat{N}+1$ points requires only $\hat{N}\log \hat{N}$ operations, so the overall complexity is simply $O(N+\hat{N}\log\hat{N})$, which is even slightly faster than NoisyChebtrunc's $O(N\log N)$.

We present a uniform error bound of Algorithm \ref{alg:noisychebtrunchetero} below:

\begin{theorem}\label{thm:error_bound_hetero}
	Let $f$ be a noisy function on $[-1,1]$ where the noise the heteroskedastic, as in Definition \ref{def:hetero}. Let $N+1$ be the total number of sample budgets, and $p_n$ be the polynomial approximant produced by Algorithm \ref{alg:noisychebtrunchetero} with input $f$ and $N+1$, and sampling is carried out at the $\hat{N}+1$ Chebyshev points $\{x_i\}_{i=0}^{\hat{N}}$. If $\epsilon_{x_i}$ is subgaussian with parameter $ \sigma(x_i)=: \sigma_i $, then for large $N$, for any fixed $x\in [-1,1]$,
	\[\mathbb{P} \left[|p_n(x)-f(x)| > 2t\frac{\|\bm{\sigma}\|_2}{\sqrt{N\hat{N}}} \sqrt{n+1} + (\sqrt{8(n+1)}+1)\|r_n \|_\infty\right] \leq 2\exp(-\frac{t^2}{2}).\] 
	
\end{theorem}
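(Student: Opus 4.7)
The plan is to reduce this problem to an effectively homoskedastic one so that the subgaussian NoisyChebtrunc bound stated in Section~\ref{sec:chebandnoisy} (and proved in \cite{yuji}) applies directly. The key observation is that in Step 2 of Algorithm~\ref{alg:noisychebtrunchetero}, the averaged noise at node $x_i$ is $\bar{\epsilon}_i = \frac{1}{k_i}\sum_{j=1}^{k_i}\epsilon_{x_i}^{(j)}$, where the $\epsilon_{x_i}^{(j)}$ are independent copies of a subgaussian random variable with parameter $\sigma_i$. By the standard closure of subgaussianity under independent averaging, $\bar{\epsilon}_i$ is subgaussian with parameter $\sigma_i/\sqrt{k_i}$. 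Inserting the prescribed choice $k_i = (N+1)\sigma_i^2/\|\bm{\sigma}\|_2^2$ gives $\sigma_i/\sqrt{k_i}=\|\bm{\sigma}\|_2/\sqrt{N+1}$, which is \emph{independent of $i$}. Thus the interpolation data $\{(x_i,y_i)\}_{i=0}^{\hat{N}}$ is statistically identical to what NoisyChebtrunc would receive if run on $\hat{N}+1$ Chebyshev nodes under i.i.d.\ subgaussian noise with parameter $\sigma_{\mathrm{eff}}:=\|\bm{\sigma}\|_2/\sqrt{N+1}$.

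Given this equivalence, I would simply invoke the NoisyChebtrunc pointwise high-probability bound with sample size $\hat{N}+1$ and subgaussian parameter $\sigma_{\mathrm{eff}}$. In its sharp form this yields, for fixed $x\in[-1,1]$,
\[
\mathbb{P}\!\left[|p_n(x)-f(x)| > 2t\,\sigma_{\mathrm{eff}}\sqrt{\frac{n+1}{\hat{N}}} + (\sqrt{8(n+1)}+1)\|r_n\|_\infty \right] \leq 2\exp(-t^2/2).
\]
Substituting $\sigma_{\mathrm{eff}}=\|\bm{\sigma}\|_2/\sqrt{N+1}$ converts the stochastic term into $2t\|\bm{\sigma}\|_2\sqrt{n+1}/\sqrt{(N+1)\hat{N}}$, which for ``large $N$'' is bounded by $2t\|\bm{\sigma}\|_2\sqrt{n+1}/\sqrt{N\hat{N}}$ as claimed (the $\sqrt{N+1}$ vs.\ $\sqrt{N}$ gap being absorbed into the qualifier). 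The deterministic truncation term $(\sqrt{8(n+1)}+1)\|r_n\|_\infty$ comes from classical Chebyshev aliasing/truncation estimates on $\hat{N}+1$ nodes, is noise-independent, and carries over verbatim from the analysis of \cite{yuji}. If one preferred to avoid citing the NoisyChebtrunc bound as a black box, the argument would proceed by writing $p_n(x)-f(x)$ as the sum of a deterministic aliasing error and a linear functional $\sum_i \alpha_i(x)\bar{\epsilon}_i$ of the averaged noises, using Parseval/orthogonality of $T_k$ on Chebyshev nodes to obtain $\sum_i \alpha_i(x)^2 \leq 2(n+1)/\hat{N}$, and then applying the Hoeffding-type tail bound for weighted sums of independent subgaussians — but this is exactly the derivation already carried out in \cite{yuji}.

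The main technical nuisance — and essentially the only real obstacle — is the integrality constraint $k_i \in \mathbb{Z}_{\geq 1}$, since the prescribed formula $(N+1)\sigma_i^2/\|\bm{\sigma}\|_2^2$ need not be an integer or even $\geq 1$. I would handle this by setting $k_i = \lceil (N+1)\sigma_i^2/\|\bm{\sigma}\|_2^2 \rceil$, so that $\sigma_i^2/k_i \leq \|\bm{\sigma}\|_2^2/(N+1)$ and the subgaussian parameter of $\bar{\epsilon}_i$ is only smaller than $\sigma_{\mathrm{eff}}$, which preserves the upper bound. The ceiling increases the total sample count by at most $\hat{N}+1$ over the exact budget (since the unrounded $k_i$ sum to $N+1$), which is negligible for large $N$; likewise one needs $N$ large enough that $(N+1)\min_i\sigma_i^2/\|\bm{\sigma}\|_2^2 \geq 1$, so that no node is assigned fewer than one sample by the formula. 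Both of these points are precisely why the theorem is qualified with ``for large $N$''. Once these bookkeeping items are dispatched, the conclusion is immediate.
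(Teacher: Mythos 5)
Your proposal is correct and follows essentially the same route as the paper: reduce to an effectively homoskedastic problem by observing that the averaged noise at each node is subgaussian with the common parameter $\|\bm{\sigma}\|_2/\sqrt{N}$ under the weighting $k_i \propto \sigma_i^2$, then invoke the NoisyChebtrunc pointwise subgaussian bound from \cite{yuji} on $\hat{N}+1$ nodes. Your handling of the integrality of $k_i$ is actually more careful than the paper's, which simply absorbs that issue into the ``for large $N$'' qualifier.
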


\begin{proof}
	For $N$ large enough, we can assume $k_i = N \frac{\sigma_i^2}{\sum \sigma_j^2} = N\frac{\sigma_i^2}{\|\bm{\sigma}\|_2^2}$, so using the properties of subgaussian random variables:
	\[ \bar{\sigma}_i = \frac{\sigma_i}{\sqrt{k_i}} = \frac{\sigma_i \|\bm{\sigma}\|_2}{\sqrt{N}\sigma_i} = \frac{\|\bm{\sigma}\|_2}{\sqrt{N}}. \]
	This means that the redistributed noise now has uniform variance, and since the sum of independent subgaussian random variables is still subgaussian, the result follows directly from Theorem 4.1 in \cite{yuji}.
\end{proof}

Below is the analogous result for subexponential distributions:

\begin{theorem}\label{thm:error_bound_hetero_exp}
	Let $f$ be a noisy function on $[-1,1]$ where the noise the heteroskedastic, as in Definition \ref{def:hetero}. Let $N+1$ be the total number of sample budgets, and $p_n$ be the polynomial approximant produced by Algorithm \ref{alg:noisychebtrunchetero} with input $f$ and $N+1$, and sampling is performed at the $\hat{N}+1$ Chebyshev points $\{x_i\}_{i=0}^{\hat{N}}$. If $\epsilon_{x_i}$ is subexponential with parameter $(\nu_i, \alpha_i)$, then for large $N$, for any fixed $x\in [-1,1]$,
	
	\[\mathbb{P} \left[|p_n(x)-f(x)|> \left(\frac{2}{\pi} \log(n+1)+1 \right) \sqrt{n+1} \left(2t\frac{\|\bm{\nu}\|_2^2}{\alpha} \frac{1}{\sqrt{N}}+\sqrt{8}\|r_n\|_\infty \right)\right]\]
	\[\leq 2(n+1)\exp(-\frac{\|\bm{\nu}\|_2^2}{2\alpha^2}t^2)\]
	for $0 \leq t\leq t_*$, and 
	\[\mathbb{P} \left[|p_n(x)-f(x)|> \left(\frac{2}{\pi} \log(n+1)+1 \right) \sqrt{n+1} \left(2t\frac{\|\bm{\nu}\|_2^2}{\alpha} \frac{1}{\sqrt{N}}+\sqrt{8}\|r_n\|_\infty \right)\right]\]
	\[\leq 2(n+1)\exp(-\frac{\|\bm{\nu}\|_2^2}{2\alpha^2}t)\]
	for $t \geq t_*$, where $ \bm{\nu} = (\nu_0,\dots, \nu_{\hat{N}})$, $ \alpha = \max_i \alpha_i$.
\end{theorem}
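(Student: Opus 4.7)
The plan is to mirror the proof of Theorem~\ref{thm:error_bound_hetero} by reducing to the i.i.d.\ subexponential case of NoisyChebtrunc analysed in \cite{yuji}. The key ingredient is the standard fact that if $X_1,\dots,X_k$ are i.i.d.\ subexponential with parameters $(\nu,\alpha)$, then $\bar X = \tfrac{1}{k}\sum_j X_j$ is subexponential with parameters $(\nu/\sqrt k,\,\alpha/k)$. Accordingly, I would assign the same weights as in the proof of Theorem~\ref{thm:error_bound_hetero} but with $\sigma_i$ replaced by $\nu_i$, namely $k_i = N\nu_i^2/\|\bm\nu\|_2^2$. After averaging, the effective noise $\bar\epsilon_{x_i}$ at node $x_i$ is subexponential with parameters $\bigl(\|\bm\nu\|_2/\sqrt N,\; \alpha_i\|\bm\nu\|_2^2/(N\nu_i^2)\bigr)$. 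The scale parameter $\nu_{\text{eff}}=\|\bm\nu\|_2/\sqrt N$ is now uniform across $i$; only the tail parameter remains heterogeneous, and it can be controlled via $\alpha=\max_i\alpha_i$.

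Next, I would feed these effective noises into the subexponential version of the NoisyChebtrunc error bound (Theorem~4.1 of \cite{yuji}) applied on the $\hat N+1$ nodes. In that proof the pointwise error $p_n(x)-f(x)$ is written as a sum over the $n+1$ retained Chebyshev coefficients, each of which is a linear combination $\sum_i a_{ij}\bar\epsilon_{x_i}$ with $\sum_i a_{ij}^2=O(1/\hat N)$ and $\max_i|a_{ij}|=O(1/\hat N)$. Applying the two-regime Bernstein tail bound for subexponential sums to each coefficient, taking a union bound over the $n+1$ indices (which produces the prefactor $2(n+1)$), and combining with the Lebesgue-type factor $\tfrac{2}{\pi}\log(n+1)+1$ together with the $\sqrt{n+1}$ inherited from the NoisyChebtrunc subexponential analysis (via a Cauchy--Schwarz bound on $\sum_{i\le n}|c_i-c_i^*|$) then produces the stated envelope. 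The two displayed probabilistic cases correspond exactly to the subgaussian regime $t\leq t_*$ and the pure subexponential tail $t>t_*$ of Bernstein's inequality, with crossover $t_*$ determined where the two exponents balance.

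The main obstacle, in contrast to the subgaussian case where the reduction is essentially a one-liner, is that the tail parameter $\alpha_{i,\mathrm{avg}}$ does \emph{not} become uniform after averaging. One must therefore carry the heterogeneous $\alpha_i$'s through Bernstein's inequality for each Chebyshev coefficient and then match the resulting aggregated parameters to the stated $\|\bm\nu\|_2^2/\alpha$ and $\|\bm\nu\|_2^2/(2\alpha^2)$ dependencies appearing in the theorem. Verifying that ``$N$ large'' suffices both for the integer rounding of $k_i$ and for the subexponential NoisyChebtrunc bound of \cite{yuji} to apply directly on the $\hat N+1$ nodes, together with this parameter bookkeeping, is where the bulk of the technical work would lie.
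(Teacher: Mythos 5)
Your proposal follows essentially the same route as the paper: weight the samples so that $k_i \propto \nu_i^2$, observe that the averaged noise then has a uniform scale parameter $\|\bm{\nu}\|_2/\sqrt{N}$ while the tail parameters are dominated by $\alpha=\max_i\alpha_i$, and invoke the subexponential version of the NoisyChebtrunc bound from \cite{yuji}. The paper's own proof is a two-line reduction citing the closure of subexponential variables under independent sums with parameter $(\sqrt{\sum_i\nu_i^2},\max_i\alpha_i)$; your extra bookkeeping of the non-uniform $\alpha_i/k_i$ (which are all bounded by $\alpha$ since $k_i\geq 1$) is the same step made explicit.
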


\begin{proof}
	The proof uses the exact same reasoning as Theorem \ref{thm:error_bound_hetero}, using the identity that $\sum_i X_i$ is subexponential with parameter $(\sqrt{\sum_i \nu_i^2}, \max_i\alpha_i)$ if $X_i$ are independent subexponential of parameter $(\nu_i,\alpha_i)$.
\end{proof}

The above results demonstrate that under heteroskedastic noise level $\sigma(x)$, the error of HeteroChebtrunc decays like $O(\frac{\|\bm{\sigma}\|_2}{\sqrt{N\hat{N}}})$, which is smaller than NoisyChebtrunc's $O(\frac{\|\bm{\sigma}\|_\infty}{\sqrt{N}})$:

\[\|\bm{\sigma}\|_2 = \sqrt{\sum_{i=0}^{\bar{N}} \sigma_i^2} \leq \sqrt{\sum_{i=0}^{\bar{N}} \|\bm{\sigma}\|_\infty^2} =  \sqrt{(\hat{N}+1)}\|\bm{\sigma}\|_\infty,\]
where equality is achieved if and only if $\sigma$ is constant.

Therefore, $\frac{\|\bm{\sigma}\|_2}{\sqrt{N(\hat{N}+1)}} \leq \frac{\|\bm{\sigma}\|_\infty}{\sqrt{N}}$, and the inequality is strict if $\sigma_i$ are not uniform, i.e. noise is heteroskedastic. This suggests that Algorithm \ref{alg:noisychebtrunchetero} is likely to outperform NoisyChebtrunc in these cases. 

In addition, for large $N$ such that the uniform error has reached the noise level, we expect our adaptation to improve the error by a factor of $\frac{\|\bm{\sigma}\|_2}{\|\bm{\sigma}\|_\infty\sqrt{\hat{N}}}$. We now demonstrate this fact through numerical experiments. Our implementations use the Chebfun toolbox \cite{chebfun} and can be found in the GitHub repository https://github.com/InigoMontoya314/HeteroChebtrunc.

\subsection{Numerical Experiments}\label{sec:num-hetero}

\subsubsection{Redistributed Noise of Algorithm \ref{alg:noisychebtrunchetero}}

We first demonstrate the noise redistribution effect of Algorithm \ref{alg:noisychebtrunchetero} as illustrated in Figure \ref{fig:redistnoise}. The experiment is conducted with the Runge function $f(x) = \frac{1}{1+25x^2}$ and error function $\sigma(x) = \begin{cases}
	1 & \text{if } x \in [0,1]\\
	0.00001 & \text{otherwise} 
\end{cases}$. Algorithm \ref{alg:noisychebtrunchetero} was applied with\footnote{This choice was not important. As will be discussed in later section, $\hat{N}$ has little impact on the uniform error as long as it is larger than the degree chosen by Mallow's $C_p$.} $\hat{N} = \sqrt{N}$.

\begin{figure}[h]
	\centering
	\centerline{\includegraphics[width=1.0\linewidth]{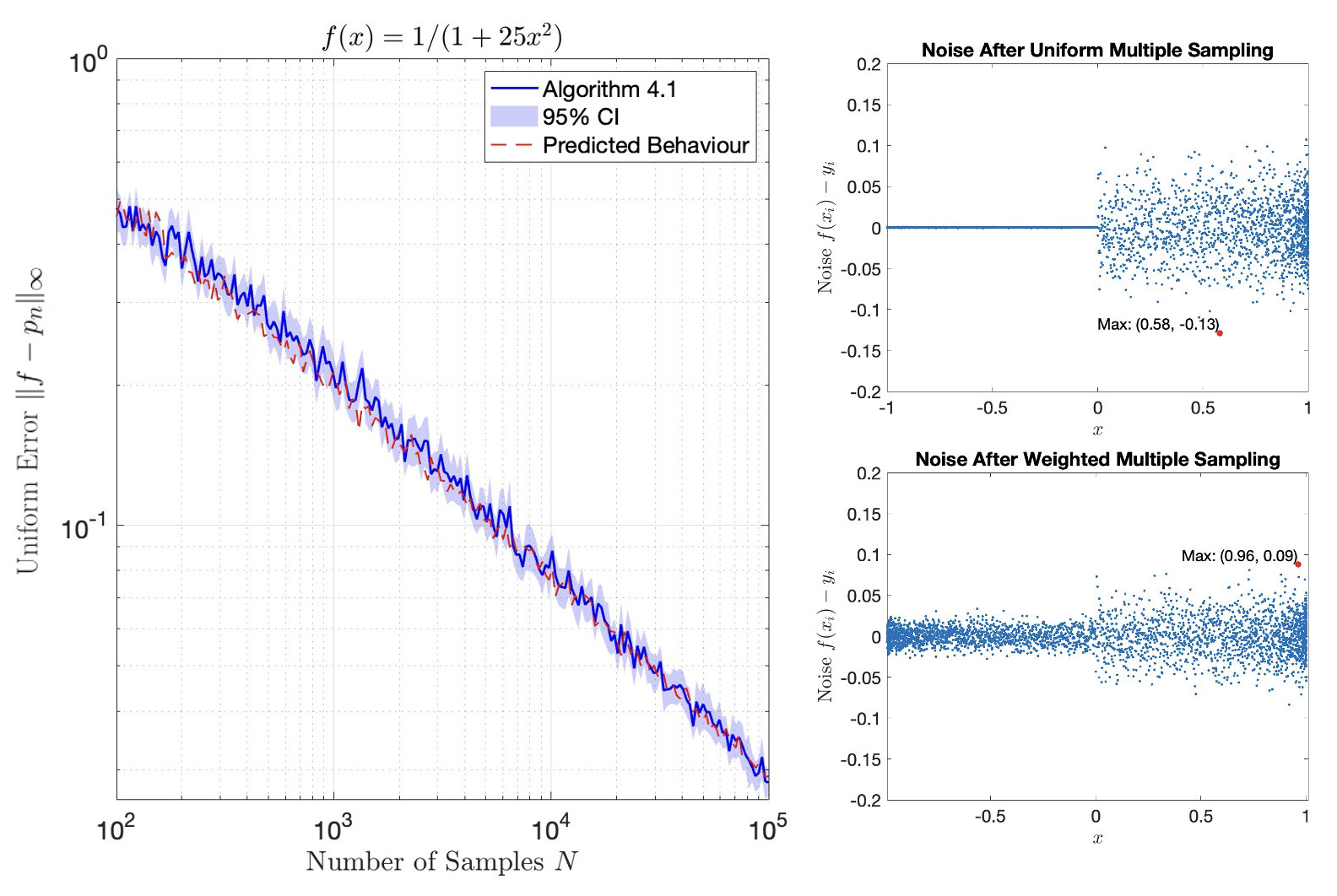}}
	\caption{Noise Redistribution Effect of Algorithm \ref{alg:noisychebtrunchetero}. Left: Uniform Error v.s. Number of Samples $N$ of Algorithm \ref{alg:noisychebtrunchetero} (blue), and of NoisyChebtrunc under the constant noise level $\frac{\|\bm{\sigma}\|_2}{\sqrt{\hat{N}}}$ (red dashed).
	Top right: Noise distribution $f(x_i) - y_i$ after multiple sample, where number of samples at each $x_i$ is equal. Bottom right: number of samples assigned by Algorithm \ref{alg:noisychebtrunchetero}. The weighted sampling redistributes noise and "evens out" the noise level, thus reducing the maximum noise.}
	\label{fig:redistnoise}
\end{figure}

According to the analysis in the previous section, for $N$ large enough Algorithm \ref{alg:noisychebtrunchetero} should demonstrate a convergence behaviour similar to NoisyChebtrunc with ${N+1}$ sample points, where the error is uniform and of parameter $ \frac{\|\bm{\sigma}\|_2}{\sqrt{\hat{N}}} $. This behaviour is confirmed by the experiment represented in the left panel of Figure \ref{fig:redistnoise}. We compute the uniform error of the approximants of both scenarios at 200 logarithmically spaced points between $N = 10^2$ and $N = 10^6$, with each data point being the average of 50 trials. It is clear that for large $N$, the predicted behaviour (red dashed) is well contained in the 95\% confidence interval (blue shaded), so Algorithm \ref{alg:noisychebtrunchetero} has a uniform error similar to NoisyChebtrunc with the constant noise level $\frac{\|\bm{\sigma}\|_\infty}{\sqrt{\hat{N}}}$, as expected in Theorem \ref{thm:error_bound_hetero}.

We also illustrate the noise redistribution effect of Algorithm \ref{alg:noisychebtrunchetero} via the figure on the right panel. Both sampling were conducted with $N = 10^7$, and the same $f$ as above. Here  $\sigma(x) = \begin{cases}
	1 & \text{if } x \in [0,1]\\
	0.01 & \text{otherwise} 
\end{cases}$ for clearer demonstration. The unweighted multiple sampling variant suggested in \cite{yuji} does not take into account the heteroskedasticity, while Algorithm \ref{alg:noisychebtrunchetero} weights the number of samples at each Chebyshev node based on the noise level $\sigma(x_i)$, thereby "evening out" the noise distribution, achieving the objective of reducing maximum noise level. Intuitively, $\|\bm{\sigma}\|_\infty$ is the maximum noise level on $[-1,1]$, while $ \frac{\|\bm{\sigma}\|_2}{\sqrt{\hat{N}}}$ is an average noise level across all samples, so if the samples are very noisy in one region, but overall has low average noise level, then this redistribution process allows one to significantly reduce maximum noise sampled and improve uniform error. 

\subsubsection{Comparison with NoisyChebtrunc}

Next we compare the uniform error of Algorithm \ref{alg:noisychebtrunchetero} against NoisyChebtrunc. 
The experiments at Figure \ref{fig:heterovsoriginal} are conducted with the Runge function $f(x) = \frac{1}{1+25x^2}$ with two different noise functions: the left panel with $\sigma(x) = \begin{cases}
	1 & \text{if } x \in [0,1]\\
	0.00001 & \text{otherwise} 
\end{cases}$, which is a simple demonstration of heteroskedasticity where the nodes are noisier on $[0,1]$ and cleaner on $[-1,0)$. The data on the right panel is computed with noise $\sigma(x) = \begin{cases}
	10 & \text{if } x \in [0.9,1]\\
	0.00001 & \text{otherwise}
\end{cases}$, which is designed to be a burst noise case, with high noise on a small interval and low noise everywhere else\footnote{All numerical experiments conducted simulates noise with the normal distribution, and we expect our conclusions from our experiments to hold for general subgaussian noise.}.

We again compute the uniform error $\| f - p_n\|_\infty$ of the polynomial approximant yielded by NoisyChebtrunc and Algorithm \ref{alg:noisychebtrunchetero}, on 200 logarithmically spaced values from $N = 10$ to $N = 10^5 $, and the choice of $\hat{N} = 3\sqrt{N}$ . On both noise cases Algorithm \ref{alg:noisychebtrunchetero} (blue) is seen to achieve lower uniform error than NoisyChebtrunc (red), with perhaps more significant improvements in the burst noise case.

\begin{figure}[H]
	\centering
	\centerline{\includegraphics[width=1\linewidth]{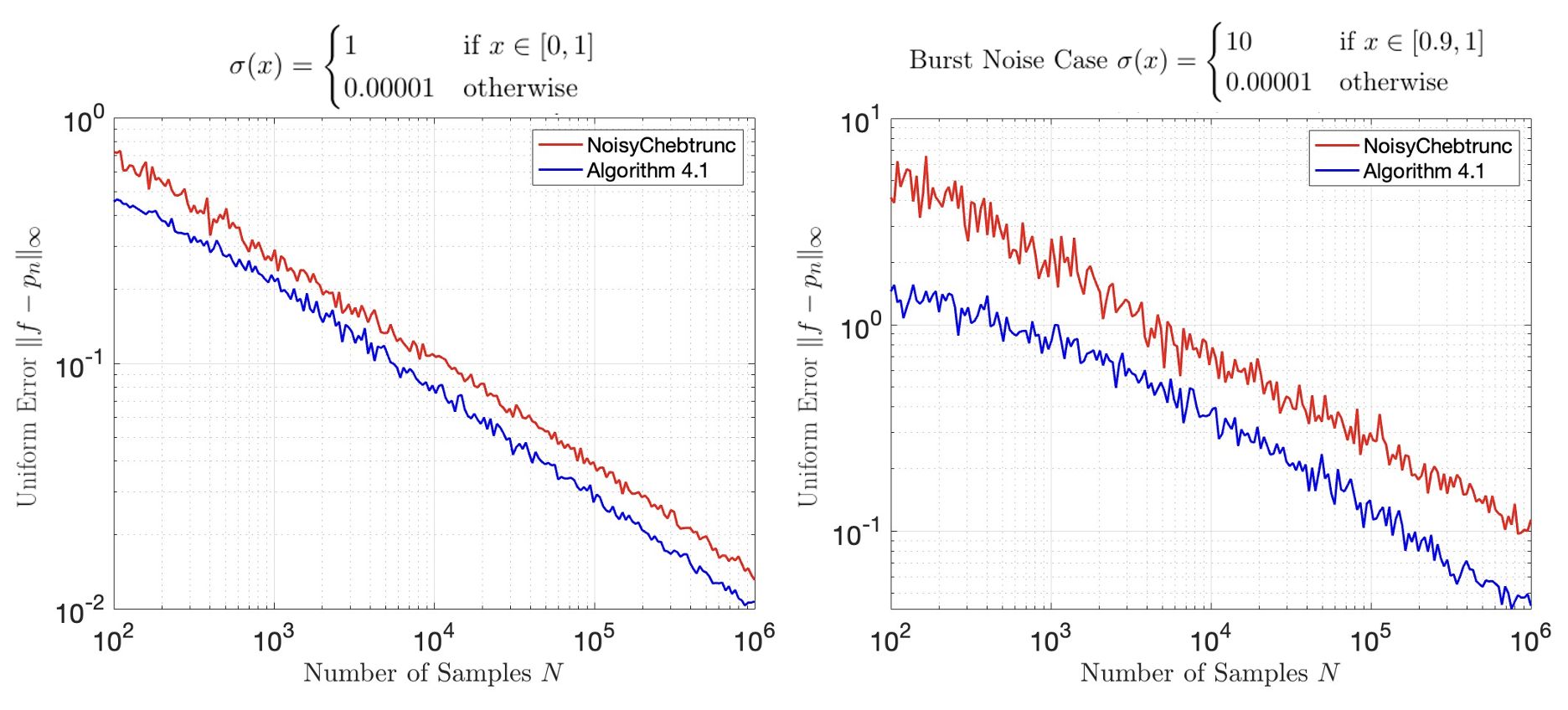}}
	\caption{Comparison of Convergence Behavior with NoisyChebtrunc. Left: $ \sigma(x) = \mathbbm{1}_{[0,1]} + 0.00001\mathbbm{1}_{[-1,0)} $. Right: $ \sigma(x) = 10\mathbbm{1}_{[0.9,1]} + 0.00001\mathbbm{1}_{[-1,0.9]} $. The plot compares uniform error of Algorithm \ref{alg:noisychebtrunchetero} (blue) with NoisyChebtrunc (red) on a log-log plot, and each data point is averaged from 50 independent trials.}
	\label{fig:heterovsoriginal}
\end{figure}

We note another observation that was not included in \cite{yuji}. As seen in Figure \ref{fig:heterovsoriginal} the decay of the uniform error is not monotone, but rather highly oscillatory, with the uniform error oscillating frequently even though the overall trend is decreasing. We run this algorithm with a very large trial size (1000 trials) and still observed similar oscillation, though larger trial size does lead to reduction of its amplitude in log-log scale. This phenomenon merits further investigation, but as here we mainly focus on analysing the proposed extensions of NoisyChebtrunc we will not study this here.

\begin{figure}[H]
	\centering
	\centerline{\includegraphics[width=1.1\linewidth]{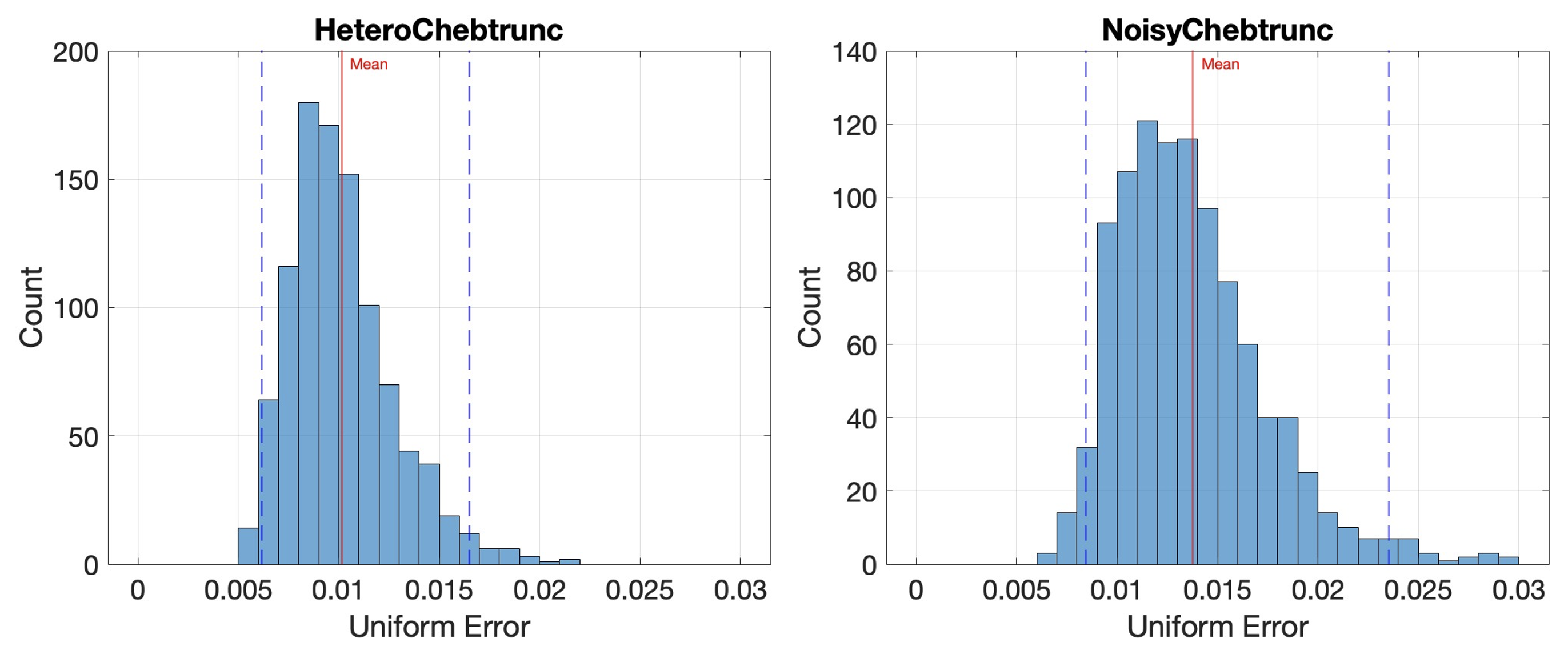}}
	\caption{Histogram of 1000 trials of Algorithm \ref{alg:noisychebtrunchetero} (left) v.s. NoisyChebtrunc (right), with mean (red line) and 2.5th and 97.5th percentile (blue dashed). }
	\label{fig:histogramhetero}
\end{figure}

We also plot the histogram of 1000 independent runs of Algorithm \ref{alg:noisychebtrunchetero} and compare it with NoisyChebtrunc. Both algorithms are applied to the Runge function $f(x) = \frac{1}{1+25x^2}$ with noise function  $\sigma(x) = \begin{cases}
	1 & \text{if } x \in [0,1]\\
	0.00001 & \text{otherwise} 
\end{cases}$, and $N = 10^6$. 

In both instances, the uniform errors are concentrated around its mean, with Algorithm \ref{alg:noisychebtrunchetero} achieving overall smaller error than the original algorithm, which is expected from previous analysis. We observe that for this noise function, $\frac{\|\bm{\sigma}\|_2}{\|\bm{\sigma}\|_\infty \sqrt{\hat{N}}} \approx \frac{1}{\sqrt{2}}$. If we multiply the mean and confidence intervals for the NoisyChebtrunc case by this factor, then the result is close to the ones calculated for Algorithm \ref{alg:noisychebtrunchetero}. This confirms the previous analysis that for large $N$, our extension reduces the uniform error by the aforementioned factor. This also means that Algorithm \ref{alg:noisychebtrunchetero} has uniform error more concentrated around its mean than the original algorithm.

\subsection{Choice of the Number of Chebyshev Points $\hat{N}$}\label{sec:nhat-choice}

There is one issue that remains unaddressed, which is the choice of $\hat{N}$ in Algorithm \ref{alg:noisychebtrunchetero}. 

We recall from Theorem \ref{thm:error_bound_hetero}, the infinity norm error of Algorithm \ref{alg:noisychebtrunchetero} is $O(\|\bm{\sigma}\|_2\sqrt{\frac{n}{\hat{N}N}}+\sqrt{n}\|r_n\|_\infty)$, and when $\sigma$ is constant, this is the same as the original NoisyChebtrunc. Therefore, we do not expect the choice of $\hat{N}$ to influence the uniform error too much.

However, we do have two requirements for the choice: (i) We need $\hat{N}$ to be larger than the degree of truncation $n$ in NoisyChebtrunc, chosen by Mallow's $C_p$; (ii) we need $\hat{N}$ to be sufficiently large so that the uniform error in NoisyChebtrunc reaches the noise level. Our experiment in Figure \ref{fig:nhatchoice} shows as long as both can be achieved, the choice of $\hat{N}$ is not important:

\begin{figure}[H]
	\centering
	\centerline{\includegraphics[width=1\linewidth]{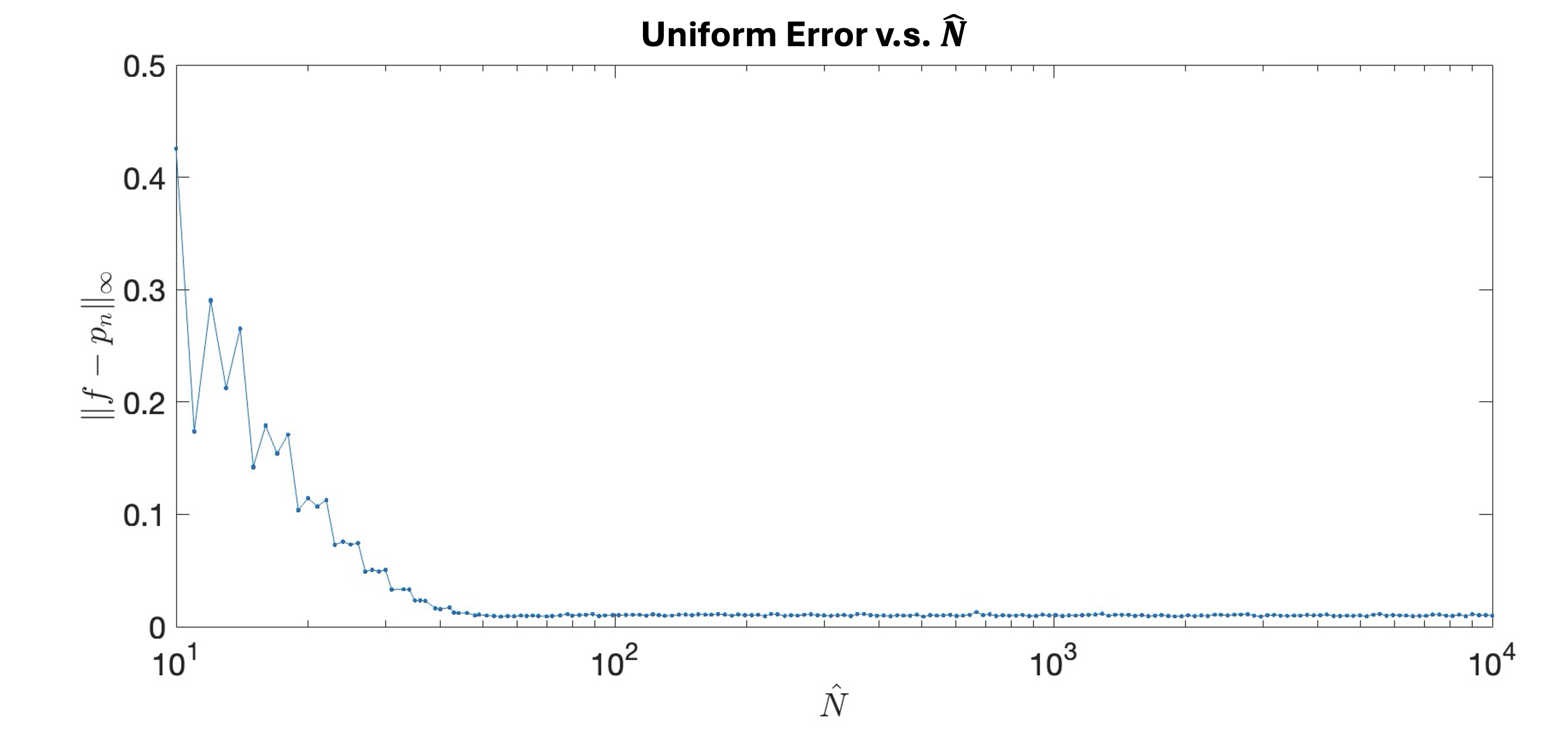}}
	\caption{Influence of the number of Chebyshev points $\hat{N}$ on uniform error. We fix $N=10^6$, and vary $\hat{N}$ from $10$ to $10^4$, in 200 logrithmically spaced points, and compute the uniform error $\|f-p_n\|_\infty$ of the approximant from Algorithm \ref{alg:noisychebtrunchetero}. Each data point is averaged from 50 independent trials. The choice of $\hat{N}$ does not impact the uniform error for $\hat{N}$ except for very small $\hat{N}$. It should also be noted that as $\hat{N}$ approaches $N$ (not shown), the uniform error increases again (see footnote).}
\label{fig:nhatchoice}
\end{figure}

We use Algorithm \ref{alg:noisychebtrunchetero} to compute approximants of the Runge function $f(x) = \frac{1}{1+25x^2}$ with $N = 10^6$ and noise function $\sigma(x) = \begin{cases}
	1 & \text{if } x \in [0,1]\\
	0.00001 & \text{otherwise} 
\end{cases}$. We use $\hat{N}$ ranging from $10$ to $10^4$ (left panel), and plot the uniform error $\|f-p_n\|_\infty$ against $\hat{N}$.\footnote{If $\hat{N}$ is too close to $N$ then we cannot effectively apply weighted sampling, so we are essentially applying the original NoisyChebtrunc, losing the ability to even out the maximum noise. Hence $\hat{N}$ should be chosen much smaller than $N$.} The uniform error are all on the same order of magnitude, except when $\hat{N}$ is less than $ 50 $. This shows the choice of $\hat{N}$ does not significantly impact the convergence for $\hat{N}$ moderately large, and in this paper we have chosen $\hat{N} = O(\sqrt{N})$, and from our experience this is more than enough to guarantee optimal uniform error.

\section{Unknown Noise Level and HeteroChebtrunc}\label{sec:noisy-unknown}

We have shown that Algorithm \ref{alg:noisychebtrunchetero} effectively improves the infinity-norm error of NoisyChebtrunc, but has one major drawback: it requires knowledge of $\sigma(x)$, or at least $ \{\sigma_i\}_{i=0}^{\hat{N}} $ where $\sigma_i = \sigma(x_i)$ are noise levels at Chebyshev points. This is usually not known in practice. However, one simple remedy for this is to \textbf{pre-sample} at each point to compute a variance estimate $S_i^2$ for the noise at each point, then apply Algorithm \ref{alg:noisychebtrunchetero}. The pre-samples are not wasted either, as they can be used as part of the weighted average when computing $y_i$. In this section, we present an algorithm called HeteroChebtrunc when the noise is independent, but heteroskedastic and $\sigma(x)$ is unknown. It includes a pre-sampling procedure to estimate $\frac{\sigma_i^2}{\sum \sigma_j}$ for $0\leq i\leq \hat{N}$. In order to determine the size and accuracy of our pre-samples, we derive a non-asymptotic bound for the sample variance estimator $S^2$ when the sample random variables are i.i.d. subgaussian. This allows us to study the error $ \left|\frac{S_i^2}{\sum_{j=0}^{\hat{N}}S_i^2} - \frac{\sigma_i^2}{\sum_{j=0}^{\hat{N}}\sigma_i^2}\right|$ with high probability. We show that HeteroChebtrunc reduces the maximum noise effect to at most $c\frac{\|\bm{\sigma}\|_2}{\sqrt{N}}$ with high probability, where $c>1$ is a constant that can be made arbitrarily close to 1 given large $N$. Therefore, HeteroChebtrunc will have a similar noise redistribution effect to Algorithm \ref{alg:noisychebtrunchetero}. Hence, this algorithm will have a smaller uniform error $\|f-p_n\|_\infty$ than NoisyChebtrunc, and does not require any knowledge of the noise level $\sigma(x)$ like Algorithm \ref{alg:noisychebtrunchetero}, making it a competitive choice when noise is heteroskedastic and unknown. We demonstrate our findings via numerical experiments.

\subsection{Determining the Noise Level: Pre-sampling}

We first state our algorithm formally:

\begin{algorithm}
	\caption{HeteroChebtrunc for approximation of $f$ under unknown heteroskedastic noise.}\label{alg:noisychebtrununknown}
	\textbf{Input:} Given an oracle for sampling the noisy univariate function $f$ as in Definition \ref{def:hetero}; and $N+1$: the computational budget on the number of samples allowed.
	
	\textbf{Output:} A polynomial $p_n$ of degree $n( < N)$.
	\begin{algorithmic}[1]
		\item Choose parameters $\hat{N}$ and $r$: $\hat{N}+1 \ll N$ is the number of Chebyshev points, and $0< r< 1$ is the proportion of sampling budget allocated for pre-sampling. Define $N_1 = rN$, and pre-sample size as $m = N_1/(\hat{N}+1)$ (assumed an integer). Default $r=0.1$, $\hat{N} = \lfloor\sqrt{N}\rfloor$.
		\item For each $x_i$, $ 0\leq i \leq \hat{N}$, sample $f(x_i)$ $m$ times to get $\{y_{i,j}\}_{j=1}^m$, and compute the sample variance estimator $S_i^2 = \frac{1}{m-1} \sum_{j=1}^m(y_{i,j} - \bar{y_i})^2$ at each $x_i$, where $\bar{y_i} = \frac{1}{m}\sum_{j=1}^{m} y_{i,j}$. 
		\item For each $x_i$, $ 0\leq i \leq \hat{N}$, define $\hat{k_i} = \max\{0, \frac{S_i^2}{\sum_{j=0}^{\hat{N}}S^2_j}(N+1-N_1) -m\}$, the number of samples at $x_i$ needed after pre-sampling, distributed proportional to $S^2_i$. \footnote{Note we don't require $\hat{k}_i \geq 1$ because we have already taken pre-samples, so no risk of not sampling any $x_i$.}
		\item Take $k_i$ samples at each $x_i$, compute $y_i$ as the mean of all taken samples at $x_i$, including pre-samples. 
		\item Interpolate at the points $(x_i, y_i)$ to find the degree $\hat{N}$ interpolant $\hat{p}_{\hat{N}}$. Truncate using Mallow's $C_p$ as in NoisyChebtrunc. 
	\end{algorithmic}
\end{algorithm}

As explained in the introduction of this section, we first allocate $N_1$ samples from our total budget, and distribute them uniformly among the Chebyshev points $ \{x_i\}_{i=0}^{\hat{N}}$. We then use the unbiased sample variance estimator
\[S_i^2 = \frac{1}{m-1} \sum_{j=1}^{m} (X_j - \bar{X})^2,\]
where $\{X_j\}_{j=1}$ are samples taken at $x_i$, and $\bar{X} = \frac{1}{m}\sum_{j=i}^{m} X_j$ is the sample mean. For our case, we assume the samples $X_j$ taken at a point $x$ have noises that are independent and subgaussian. To be precise, we assume $\{X_j\}_{j=1}^m$ are i.i.d. subgaussian with mean $f(x) $ and variance $\sigma(x)$.

Using the estimations $S_i^2$, we then distribute the rest of sampling budget $N+1 - N_1$ to the Chebyshev points, proportional to $\frac{S_i^2}{\sum S_j^2}$ as in Algorithm \ref{alg:noisychebtrunchetero}, define $y_i$ as the mean of all samples (pre-samples included so that they are not wasted)  taken at $x_i$. then interpolate and truncate using Mallow's $C_p$ as is standard in our approach. In total, at each $x_i$, $k_i = \hat{k}_i+ m$ samples are taken.

We draw the attention of the reader briefly to the computational complexity of this algorithm: all the pre-sampling, estimation and weighted sampling can be performed in $O(N)$ operations, so it essentially has the same $O(
\hat{N}\log \hat{N}+N)$ time complexity as Algorithm \ref{alg:noisychebtrunchetero}.

\subsection{Accuracy of the Sample Variance Estimator} \label{sec:sample_var}

Recall in order to apply Algorithm \ref{alg:noisychebtrunchetero}, we take a weighted sample approach by taking approximately $N \frac{\sigma^2_i}{\sum_j \sigma^2_j}$ samples at $x_i$. When noise level is unknown, at each Chebyshev point $x_i$, say $x$, we take $m$ pre-samples $X_1,..., X_m$ and compute their sample variance.

Our problem is to find a bound on the absolute error $|S^2 - \sigma(x)^2|$ for a given $m$ and $x$ locally, which can then be used to determine the error of our estimate $|\frac{S^2_i}{\sum S^2_j} - \frac{\sigma_i}{\sum \sigma_j}|$. While the result feels basic, the authors are unable to find such a bound in the current literature, thus we prove one here as Lemma \ref{lem:sample_var_bound}. In the process, we apply a class of concentration inequalities called Bernstein-type bounds, which are commonly used in high-dimensional probability.

\begin{lemma}\label{lem:sample_var_bound}
	Suppose $ X$ is subgaussian of parameter $\sigma$, $X_1, ...,X_m$ i.i.d. realisations of X, and let $S^2 = \frac{1}{m-1} \sum_{i=1}^{m} (X_i-\bar{X})^2$, the sample variance estimator using $m$ samples. Then $S^2$ is subexponential of parameter $(\nu, \alpha)$, where $\nu \leq \frac{4\sigma}{\sqrt{m}}(1+1/\sqrt{m-1})$, $\alpha = \frac{4\sigma}{m}(1+1/\sqrt{m-1})$. Hence we have the bound for $t>0$, 
	
	\[\mathbb{P}(|S^2-\sigma^2| > t) \leq \begin{cases}
		2\exp(-\frac{t^2}{2\nu^2}) & \text{if } 0\leq t\leq \frac{\nu^2}{\alpha}\\
		2\exp(-\frac{t}{2\alpha}) & \text{if } t>\frac{\nu^2}{\alpha}
	\end{cases}.\]
	Or equivalently, 
	
	\[\mathbb{P}(|S^2-\sigma^2| > t) \leq \max \{		2\exp(-\frac{t^2}{2\nu^2}),
	2\exp(-\frac{t}{2\alpha})\}	.\]
\end{lemma}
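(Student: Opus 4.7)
The plan is to rewrite $S^2-\sigma^2$ as a centred sum of $m$ independent subexponential terms plus one lower-order correction, extract subexponential parameters for each piece from standard MGF bounds, and then combine. Reducing first to $\mathbb{E}[X]=0$ (which leaves $S^2$ and the subgaussian parameter invariant), the identity $(m-1)S^2=\sum X_i^2-m\bar X^2$ gives the key decomposition
\[(m-1)(S^2-\sigma^2)=\sum_{i=1}^m(X_i^2-\sigma^2)-(m\bar X^2-\sigma^2),\]
in which both the sum and the correction term are centred, since $\mathbb{E}[X_i^2]=\sigma^2$ and $\mathbb{E}[\bar X^2]=\sigma^2/m$.

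The analytic core is the claim that, for a centred subgaussian $X$ of parameter $\sigma$, the variable $X^2-\sigma^2$ is subexponential with some parameters $(\nu_0,\alpha_0)$. I would establish this by bounding the MGF $\mathbb{E}[\exp(\lambda(X^2-\sigma^2))]$ via the layer-cake formula against the tail in the paper's definition, $\mathbb{P}(|X|\geq t)\leq 2\exp(-t^2/\sigma^2)$. For $|\lambda|$ small enough, this yields a Gaussian-integral-style estimate of the form $(1-c\lambda\sigma^2)^{-1/2}$, and a second-order expansion of $\log$ then produces an inequality of the form $\mathbb{E}[\exp(\lambda(X^2-\sigma^2))]\leq\exp(\lambda^2\nu_0^2/2)$ with explicit $\nu_0$ and $\alpha_0$, which is precisely the subexponential MGF condition used throughout the paper.

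Combining is then mechanical. Independence gives that $\sum_{i=1}^m(X_i^2-\sigma^2)$ is subexponential with parameters $(\sqrt m\,\nu_0,\alpha_0)$. For the correction, $\sqrt m\,\bar X$ is subgaussian of parameter $\sigma$ (independent subgaussians add in quadrature), so $m\bar X^2-\sigma^2=(\sqrt m\,\bar X)^2-\sigma^2$ inherits the single-term parameters $(\nu_0,\alpha_0)$. Adding the two independent contributions (with appropriate sign), $(m-1)(S^2-\sigma^2)$ is subexponential with parameters of order $(\sqrt{m+1}\,\nu_0,\alpha_0)$, and dividing through by $m-1$ produces the $\tfrac{1}{\sqrt m}(1+1/\sqrt{m-1})$ and $\tfrac{1}{m}(1+1/\sqrt{m-1})$ dependence on $m$ in the claimed bound. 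The final tail statement then follows directly from the Bernstein-type inequality built into the subexponential definition.

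The main obstacle will be the explicit constants. The factor $4$, and the $(1+1/\sqrt{m-1})$ correction, both come out of the MGF computation for $X^2-\sigma^2$ under the paper's slightly non-standard subgaussian normalisation (no factor of $1/2$ in the tail exponent), together with the precise bookkeeping when the two independent contributions are rescaled by $1/(m-1)$. Conceptually nothing new happens beyond a standard Bernstein argument, but this is where one has to be careful to land exactly on the quoted form.
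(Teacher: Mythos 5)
Your proof is correct in outline but takes a genuinely different route from the paper. The paper works directly with the terms $(X_i-\bar X)^2$: it observes that $X_i-\bar X=\frac{m-1}{m}X_i-\frac{1}{m}\sum_{j\neq i}X_j$ is subgaussian of parameter $\frac{m-1+\sqrt{m-1}}{m}\sigma$, invokes the cited fact that the centred square of a subgaussian variable of parameter $\tau$ is $(4\tau,4\tau)$-subexponential, and then sums the $m$ squares and divides by $m-1$; the factor $(1+1/\sqrt{m-1})$ falls out of the subgaussian parameter of $X_i-\bar X$. You instead use the algebraic identity $(m-1)S^2=\sum X_i^2-m\bar X^2$, which has the advantage that the dominant block $\sum_i(X_i^2-\sigma^2)$ is a sum of genuinely independent subexponential variables, so the quadrature rule $\nu=\sqrt m\,\nu_0$ applies legitimately --- whereas the paper's terms $(X_i-\bar X)^2$ are mutually dependent and the same rule is applied there without comment. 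Your route also only needs the centred-square fact for the clean variables $X$ and $\sqrt m\,\bar X$, rather than for the slightly awkward combination $X_i-\bar X$.

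One slip: $\sum X_i^2$ and $m\bar X^2$ are not independent of each other ($\bar X$ is a function of the $X_i$), so you cannot combine the two blocks in quadrature to obtain $\sqrt{m+1}\,\nu_0$. For dependent subexponential variables the $\nu$ parameters add linearly (via H\"older on the moment generating functions), giving $(\sqrt m+1)\nu_0$ instead; this preserves the $1/\sqrt m$ rate and the overall structure of the bound but changes the constant, and is exactly the kind of bookkeeping you flag at the end as needing care. With that correction your argument is sound and, for the dominant term, somewhat more careful about independence than the paper's own proof.
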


%
%
%

\begin{proof}
	From the standard properties of subgaussian and subexponential random variables, for each $0\leq i\leq m$,
	\[X_i-\bar{X} = \frac{m-1}{m}X_i - \frac{1}{m}\sum_{j\neq i} X_j\] is subgaussian of parameter $\frac{m-1}{m}\sigma +\frac{\sqrt{m-1}}{m}\sigma$.
	
	If $X$ is subgaussian of parameter $\sigma$, then $X^2-\mathbb{E}[X^2]$ is subexponential of parameter $(4\sigma,4\sigma)$ \cite{MIT_high_dim_stats}, from this one obtains $S^2 = \frac{1}{m-1}\sum_{i=1}^{m} (X_i-\bar{X})^2$ is subexponential of parameter $(\nu, \alpha)$, where 
	\[\begin{cases}
		\nu \leq \frac{4\sigma}{\sqrt{m}}(1+\frac{1}{\sqrt{m-1}})\\
		\alpha = \frac{4\sigma}{m}(1+\frac{1}{\sqrt{m-1}})
	\end{cases}.\]
	
	The result now follows directly from definition of subexponential random variable.
\end{proof}

Using the above lemma, we derive a bound on the relative error of our estimation $\frac{S_i^2}{\sum S_j^2}$, which we later use to study the noise sampled in HeteroChebtrunc. We state it here as a proposition:

\begin{proposition}\label{cor:var_proportional_bound}
	For each $0\leq i\leq \hat{N}$ uniformly, suppose $S^2_i$ is the sample variance estimator for $\sigma^2_i$ from $m$ samples taken at $x_i$, then for any $0<s<1$, 
	\[\mathbb{P}\left(\left|\frac{S_i^2}{\sum_{j=0}^{\hat{N}}S_j^2} - \frac{\sigma_i^2}{\sum_{j=0}^{\hat{N}}\sigma_j^2}\right|>s\frac{\sigma_i^2}{\sum_{j=0}^{\hat{N}}\sigma_j^2}\right)\leq \sum_{j=0}^{\hat{N}} K_j,\]
	
	where 
	
	\[K_j = \max\{
	2\exp(-\frac{s^2\sigma_i^2 m}{32(2+s)^2}(1+\frac{1}{\sqrt{m-1}})^{-2})\text{, }
	2\exp(-\frac{s\sigma_im}{8(2+s)}(1+\frac{1}{\sqrt{m-1}})^{-1})
	\}.\]
	
	%
	
\end{proposition}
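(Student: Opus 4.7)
The plan is to control each sample variance estimator $S_j^2$ individually via Lemma \ref{lem:sample_var_bound}, combine the pieces by a union bound, and then convert simultaneous closeness of $S_j^2$ to $\sigma_j^2$ into the claimed relative-error bound on the normalised ratios $S_i^2/\sum_j S_j^2$. The probabilistic work is already packaged in Lemma \ref{lem:sample_var_bound}; what remains is a calibration of constants and a short algebraic reduction.

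First I would fix the tolerance $\varepsilon = s/(2+s)$ and introduce the good event $A = \bigcap_{j=0}^{\hat{N}} \{|S_j^2 - \sigma_j^2| \leq \varepsilon \sigma_j^2\}$. Writing $T = \sum_{j} S_j^2$ and $\Sigma = \sum_{j} \sigma_j^2$, summing the defining inequalities of $A$ gives $|T - \Sigma| \leq \varepsilon \Sigma$, so on $A$ both $S_i^2/\sigma_i^2$ and $T/\Sigma$ lie in $[1-\varepsilon,\, 1+\varepsilon]$. A direct computation then places the ratio $\frac{S_i^2/\sigma_i^2}{T/\Sigma}$ inside $\left[\frac{1-\varepsilon}{1+\varepsilon},\, \frac{1+\varepsilon}{1-\varepsilon}\right]$, and the calibration $\varepsilon = s/(2+s)$ is chosen precisely so that $\frac{2\varepsilon}{1-\varepsilon} = s$. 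Consequently $\left|\frac{S_i^2}{T}\cdot\frac{\Sigma}{\sigma_i^2} - 1\right| \leq s$, which after multiplying through by $\sigma_i^2/\Sigma$ is exactly the inequality $\left|\frac{S_i^2}{T} - \frac{\sigma_i^2}{\Sigma}\right| \leq s\,\frac{\sigma_i^2}{\Sigma}$ that we wish to violate with low probability.

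For the probabilistic half, I would apply Lemma \ref{lem:sample_var_bound} to each $S_j^2$ with the threshold $t_j = \varepsilon\sigma_j^2 = \frac{s}{2+s}\sigma_j^2$. Substituting the parameters $\nu_j \leq \frac{4\sigma_j}{\sqrt{m}}\bigl(1+1/\sqrt{m-1}\bigr)$ and $\alpha_j = \frac{4\sigma_j}{m}\bigl(1+1/\sqrt{m-1}\bigr)$ from that lemma into the two exponentials $\exp(-t_j^2/(2\nu_j^2))$ and $\exp(-t_j/(2\alpha_j))$ produces the two exponents appearing in the claimed $K_j$ (with $\sigma_j$ in the natural formulation; the statement's $\sigma_i$ amounts to using a uniform threshold across $j$). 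A union bound over $0 \leq j \leq \hat{N}$ then yields $\mathbb{P}(A^c) \leq \sum_{j=0}^{\hat{N}} K_j$, and combined with the deterministic implication from the previous paragraph this is the proposition. The only genuinely obstructive step is the calibration $\varepsilon = s/(2+s)$ — it is the largest $\varepsilon$ for which $\frac{2\varepsilon}{1-\varepsilon}\leq s$, and any looser choice will introduce spurious constants into the exponents of $K_j$; everything else is routine substitution and a union bound.
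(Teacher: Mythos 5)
Your proposal is correct and follows essentially the same route as the paper: calibrate the per-node tolerance to $\varepsilon = s/(2+s)$ so that $(1+\varepsilon)/(1-\varepsilon)-1 = s$, pass to the contrapositive, union-bound over $j$, and substitute the subexponential parameters from Lemma \ref{lem:sample_var_bound}. Your remark that the exponents should naturally carry $\sigma_j$ rather than $\sigma_i$ is also consistent with the paper's own derivation of $K_j$.
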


\begin{proof}
	Observe that $|S^2_i - \sigma_i|\leq \frac{s}{2+s}\sigma^2_i$ for all $i = 0,\dots, \hat{N}$, implies
	
	\begin{align*}
		\left|\frac{S_i^2}{\sum_{j=0}^{\hat{N}}S_j^2} - \frac{\sigma_i^2}{\sum_{j=0}^{\hat{N}}\sigma_j^2}\right| &\leq \frac{(1+\frac{s}{2+s})\sigma_i^2}{(1-\frac{s}{2+s})\sum_j \sigma^2_j} - \frac{\sigma^2_i}{\sum_j \sigma^2_j}\\
		&= (\frac{2+s+s}{2+s-s}-1) \frac{\sigma^2_i}{\sum_j\sigma^2_j}\\
		&= s\frac{\sigma^2_i}{\sum_j\sigma^2_j}.
	\end{align*}
	
	The first inequality holds by noting that if $x,y \in [-t,t]$, the function $g(x,y) = \left|\frac{1+x}{1+y} - 1\right|$  achieves its maximum at $x = t, y = -t$. Thus, by the contrapositive,  $\left|\frac{S_i^2}{\sum_{j=0}^{\hat{N}}S_j^2} - \frac{\sigma_i^2}{\sum_{j=0}^{\hat{N}}\sigma_j^2}\right| > s\frac{\sigma_i^2}{\sum_{j=0}^{\hat{N}}\sigma_j^2}$ implies at least of one of $|S^2_j - \sigma_j^2| > \frac{2}{2+s}\sigma_j^2$. In terms of probability, this implies
	
	\begin{align*}
		\mathbb{P}\left(\left|\frac{S_i^2}{\sum_{j=0}^{\hat{N}}S_j^2} - \frac{\sigma_i^2}{\sum_{j=0}^{\hat{N}}\sigma_j^2}\right|>s\frac{\sigma_i^2}{\sum_{j=0}^{\hat{N}}\sigma_j^2}\right)&\leq \mathbb{P}\left(|S^2_j - \sigma^2_j|> \frac{s}{2+s}\sigma^2_j\text{ for some } j\right) \\
		&\leq \sum_{j=0}^{\hat{N}} \mathbb{P}\left(|S^2_j - \sigma^2_j|> \frac{s}{2+s}\sigma^2_j\right)\\
		& = \sum_{j=0}^{\hat{N}} K_j\\
	\end{align*}
	using the union bound. Now by Lemma \ref{lem:sample_var_bound}, we obtain
	
	\begin{align*}
		K_j &= \max\{2\exp(-\frac{(\frac{s}{2+s}\sigma^2_j)^2}{2\nu^2}),2\exp(-\frac{\frac{s}{2+s}\sigma^2_j}{2\alpha}) \}\\
		&= \max\{2\exp(-\frac{(\frac{s}{2+s}\sigma^2_j)^2}{2(\frac{4\sigma_j}{\sqrt{m}}(1+\frac{1}{\sqrt{m-1}}))^2}), 2\exp(-\frac{\frac{s}{2+s}\sigma^2_j}{2 \frac{4\sigma_j}{m}(1+\frac{1}{\sqrt{m-1}})})\}\\
		&= \max\{2\exp(-\frac{s^2\sigma_j^2m}{32(2+s)^2}(1+\frac{1}{\sqrt{m-1}})^{-2}) , 2\exp(\frac{s\sigma_jm}{8(2+s)}(1+\frac{1}{\sqrt{m-1}})^{-1})\}. 
	\end{align*}
	%
	
\end{proof}

\begin{remark}
	We do not expect the first bound derived in this corollary to be holding tight, because in general the probability of at least one of $|S_i^2 - \sigma_i^2| > \frac{s}{2+s} \sigma_i^2$ can be a considerable overestimate of the probability of $\left|\frac{S_i^2}{\sum_{j=0}^{\hat{N}}S_j^2} - \frac{\sigma_i^2}{\sum_{j=0}^{\hat{N}}\sigma_j^2}\right|>s\frac{\sigma_i^2}{\sum_{j=0}^{\hat{N}}\sigma_j^2}$, and the union bound estimation using $\sum K_j$ is crude as well. However, this bound is good enough as each $K_i$ still decays exponentially with respect to $m$. As long as we have good control of $\hat{N}$, the right hand side will still decay exponentially as $m$ tends to infinity. 
\end{remark}

If we set $N_1 = rN$ for some $0<r<1$, then $m = r\frac{N}{\hat{N}}$. Now if $\hat{N} \ll \sqrt{N}$ which we assume in our implementations, then $ \hat{N} \ll m$, and so $\sum_{j=0}^{\hat{N}} K_j$ as in Proposition \ref{cor:var_proportional_bound} will converge to 0 exponentially. This means for each $0<s$, $ \left|\frac{S_i^2}{\sum_{j=0}^{\hat{N}}S_i^2} - \frac{\sigma_i^2}{\sum_{j=0}^{\hat{N}}\sigma_i^2}\right| \leq s\frac{\sigma_i^2}{\sum_{j=0}^{\hat{N}}\sigma_i^2}$ holds with high probability for large $m$ hence large $N$. Assuming this holds, at each $\sigma_i$, we will have sampled at least $ m+(N-N_1)\frac{S^2_i}{\sum_{j=0} S^2_j} \geq N(1-r)(1-s)\frac{\sigma_i^2}{\sum_{j=0}^{\hat{N}}\sigma_i^2}$ times\footnote{Here we ignore the $m$ pre-samples just to simplify our argument and better demonstrate our conclusion.}. This reduces the standard deviation of noise at $x_i$ from $\sigma_i$ to at most

\[\frac{\sigma_i}{\sqrt{N(1-r)(1-s)\frac{\sigma_i^2}{\sum_{j=0}^{\hat{N}}\sigma_i^2}}} = \frac{1}{\sqrt{(1-s)(1-r)}}\frac{\|\bm{\sigma}\|_2}{\sqrt{N}}. \]

Recall that the noise sampled in Algorithm \ref{alg:noisychebtrunchetero} at each Chebyshev point is roughly $\frac{\|\bm{\sigma}\|_2}{\sqrt{N}}$. Hence if for fixed $s, r$, for large $N$ we find that the noise level sampled at each $x_i$ using HeteroChebtrunc is greater than that of Algorithm \ref{alg:noisychebtrunchetero} by only a constant factor $\frac{1}{\sqrt{(1-r)(1-s)}}$. $ s $ can then be made arbitrarily small given large enough $N$. Therefore, we expect that the approximant $p_n$ of HeteroChebtrunc should achieve a smaller uniform error $\|f-p_n\|$ than the original NoisyChebtrunc, and approaches $\frac{1}{\sqrt{1-r}}$ times the error of Algorithm \ref{alg:noisychebtrunchetero} as $N$ grows large. To be precise, we have proven the following:

\begin{theorem}
    (Uniform Error bound of HeteroChebtrunc) For any $ 0 < s < 1$ fixed, let $f$ be a noisy function on $[-1,1]$ where the noise the heteroskedastic, as in Definition \ref{def:hetero}. Let $N+1$ be the total number of sample budgets, and $p_n$ be the polynomial approximant produced by Algorithm \ref{alg:noisychebtrununknown} with input $f$ and $N+1$, and sampling is carried out at the $\hat{N}+1$ Chebyshev points $\{x_i\}_{i=0}^{\hat{N}}$, with proportion of pre-sampling $0<r<1$. If $\epsilon_{x_i}$ is subgaussian with parameter $ \sigma(x_i)=: \sigma_i $, then for large enough $N$, for any fixed $x\in [-1,1]$,
	\[\mathbb{P} \left[|p_n(x)-f(x)| > 2t\frac{\|\bm{\sigma}\|_2}{\sqrt{N\hat{N}(1-s)(1-r)}} \sqrt{n+1} + (\sqrt{8(n+1)}+1)\|r_n \|_\infty\right] \leq 2\exp(-\frac{t^2}{2}).\] 
\end{theorem}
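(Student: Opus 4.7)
The plan is to reduce the claim to Theorem~\ref{thm:error_bound_hetero}: I would show that, with probability tending to one as $N\to\infty$, the weighted sampling of Algorithm~\ref{alg:noisychebtrununknown} yields data $(x_i, y_i)$ whose noise at every node is subgaussian with parameter at most $\frac{1}{\sqrt{(1-r)(1-s)}}\frac{\|\bm{\sigma}\|_2}{\sqrt{N}}$, i.e.\ uniform across $i$ up to the factor $\frac{1}{\sqrt{(1-r)(1-s)}}$ relative to the known-$\sigma$ case. Once uniformity is in hand, the tail bound follows by applying the subgaussian interpolation estimate (Theorem 4.1 of \cite{yuji}) on the $\hat{N}+1$ Chebyshev nodes, mirroring the one-line reduction at the end of the proof of Theorem~\ref{thm:error_bound_hetero}.

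To establish the uniform noise bound, I would first apply Proposition~\ref{cor:var_proportional_bound} with the given $s$ and take the good event
\[E \;:=\; \left\{\forall\, i:\ \left|\frac{S_i^2}{\sum_j S_j^2} - \frac{\sigma_i^2}{\sum_j \sigma_j^2}\right| \leq s\,\frac{\sigma_i^2}{\sum_j \sigma_j^2}\right\}.\]
Since $m = rN/(\hat{N}+1) \gg \hat{N}$ whenever $\hat{N} \ll \sqrt{N}$, each $K_j$ from Proposition~\ref{cor:var_proportional_bound} decays exponentially in $m$, so a union bound gives $\mathbb{P}(E) \to 1$ as $N\to\infty$ (this is the content of ``for large enough $N$''). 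On $E$ the definition of $\hat{k}_i$ in Algorithm~\ref{alg:noisychebtrununknown} delivers a total sample count at $x_i$ of
\[k_i \;=\; m + \hat{k}_i \;\geq\; \frac{S_i^2}{\sum_j S_j^2}(N+1-N_1) \;\geq\; N(1-r)(1-s)\,\frac{\sigma_i^2}{\sum_j \sigma_j^2},\]
so the average of the noise at $x_i$ is subgaussian with parameter $\sigma_i/\sqrt{k_i} \leq \frac{\|\bm{\sigma}\|_2}{\sqrt{N(1-r)(1-s)}}$, exactly as derived in the paragraph preceding the theorem.

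The main obstacle I anticipate is the probabilistic dependence between pre-samples and final averages: the pre-samples at $x_i$ enter both the weight $k_i$ (via $S_i^2$) and the computation of $y_i$. I would handle this by conditioning on the $\sigma$-algebra generated by all pre-samples. Conditionally, the weights $\hat{k}_i$ are deterministic and the fresh samples at distinct nodes remain independent zero-mean subgaussians of parameter $\sigma_i$, so the partial average over only the $\hat{k}_i$ fresh draws already satisfies the subgaussian bound above; the contribution of the $m$ pre-samples to $y_i$ is of relative size $m/k_i = O(\hat{N}/N)$ and therefore inflates the noise parameter by a factor $1+o(1)$ absorbed into the ``large $N$'' qualifier (alternatively, one may simply discard the pre-samples when defining $y_i$ inside the proof). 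With uniform subgaussian noise across nodes established, Theorem 4.1 of \cite{yuji} applies verbatim as in the proof of Theorem~\ref{thm:error_bound_hetero}, with $\|\bm{\sigma}\|_2/\sqrt{N}$ replaced by $\|\bm{\sigma}\|_2/\sqrt{N(1-r)(1-s)}$, yielding the stated inequality.
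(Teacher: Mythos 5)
Your proposal follows essentially the same route as the paper: the paper establishes this theorem via the discussion immediately preceding it, which invokes Proposition~\ref{cor:var_proportional_bound} to guarantee $k_i \geq N(1-r)(1-s)\sigma_i^2/\|\bm{\sigma}\|_2^2$ at every node with high probability for large $N$, concludes that the averaged noise is then subgaussian with parameter at most $\|\bm{\sigma}\|_2/\sqrt{N(1-r)(1-s)}$, and applies Theorem 4.1 of \cite{yuji} exactly as in the proof of Theorem~\ref{thm:error_bound_hetero}. Your explicit conditioning on the pre-sample $\sigma$-algebra to handle the dependence between $S_i^2$ and $y_i$ addresses a point the paper simply glosses over (it discards the pre-samples in a footnote ``to simplify our argument''), so your write-up is, if anything, slightly more careful on that step.
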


We will illustrate this behaviour later in the numerical experiment section.

To summarise, we first derive Lemma \ref{lem:sample_var_bound}, a non-asymptotic error bound of the unbiased sample variance estimator $S^2_i$ when sample random variables are i.i.d. subgaussian. We are then able to determine the relative error of our pre-sample estimate $ \frac{S_i^2}{\sum_{j=0}^{\hat{N}}S_i^2}$, which is used to determine the weightings at each $x_i$. Finally, we combine the above results and prove that HeteroChebtrunc can achieve a noise redistribution that is arbitrarily close to that of Algorithm \ref{alg:noisychebtrunchetero} with high probability (converging at exponential rate), and eventually leads to essentially the uniform error rate $O(\frac{\|\bm{\sigma}\|_2}{\sqrt{1-r}}\sqrt{\frac{n}{N}})$ without needing any information on the noise level $\sigma(x)$.

\subsection{Numerical Experiments}\label{sec:num-unknown}

In this section, we demonstrate the properties of HeteroChebtrunc via numerical experiments. We choose to display these results separately from Algorithm \ref{alg:noisychebtrunchetero} to highlight different individual properties of these algorithms. 

\subsubsection{Uniform Error Convergence}

We first demonstrate the conclusion on uniform error rate in Section \ref{sec:sample_var} in Figure \ref{fig:threewaycomparison}. We run NoisyChebtrunc, HeteroChebtrunc and Algorithm \ref{alg:noisychebtrunchetero} with the Runge function $f(x) = \frac{1}{1+25x^2}$. We simulate normal noise with three different noise function: $\sigma_1(x) = |\sin(3x) + 0.00001|$, $\sigma_2(x) = \begin{cases}
	1 & \text{if } x \in [0,1]\\
	0.00001 & \text{otherwise} 
\end{cases}$, and $\sigma_3(x) = \begin{cases}
	10 & \text{if } x \in [0.9,1]\\
	0.00001 & \text{otherwise}
\end{cases}$. We use the choice $r=0.1$ and $\hat{N} = \sqrt{N}$.

\begin{figure}[H]
	\centering
	\centerline{\includegraphics[width=1.1\linewidth]{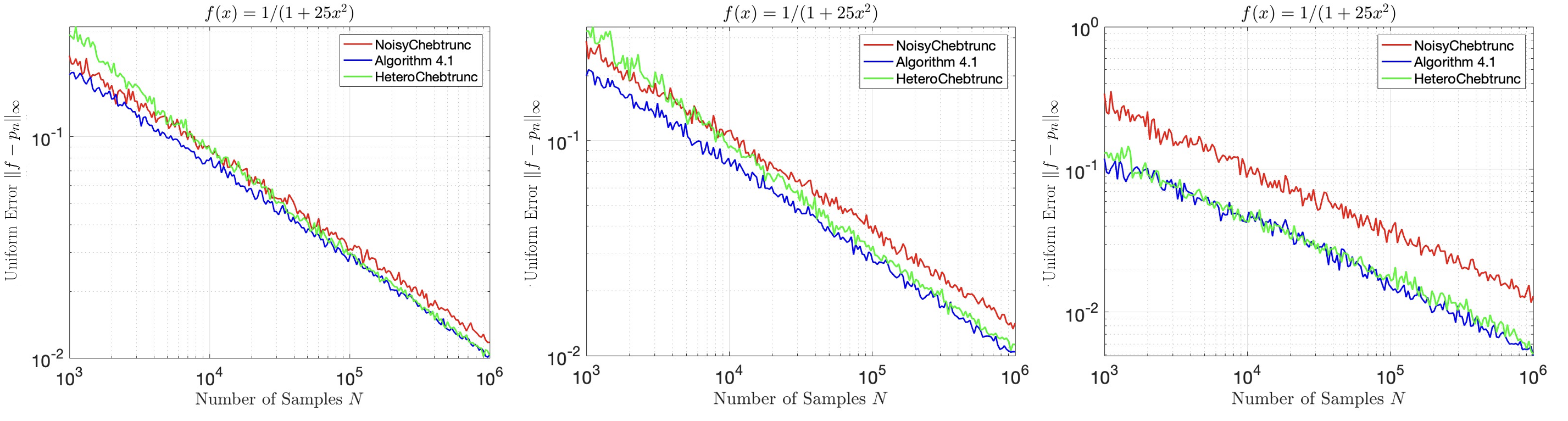}}
	\caption{Uniform error comparison of the three Algorithms. Left: $\sigma_1(x) = \sin(3x) + 0.00001$. Middle: $\sigma_2(x) = \mathbbm{1}_{[0,1]} + 0.00001\mathbbm{1}_{[-1,0)}$. Right: $ \sigma(x) = 10\mathbbm{1}_{[0.9,1]} + 0.00001\mathbbm{1}_{[-1,0.9]} $. Each data point is averaged from 50 trials. In all three experiments HeteroChebtrunc has uniform error close to Algorithm \ref{alg:noisychebtrunchetero} for $N$ large. }
	\label{fig:threewaycomparison}
\end{figure}

The three graphs demonstrate the influence different $\sigma(x)$ has on the uniform error. When $\|\bm{\sigma}\|_\infty$ is close to $\frac{\|\bm{\sigma}\|_2}{\sqrt{\hat{N}}}$ as in $\sigma_1$, and noise redistribution does not reduce maximum noise significantly the three algorithms do not have noticeable differences for large $N$. If $\|\bm{\sigma}\|_\infty$ is considerably larger than $\frac{\|\bm{\sigma}\|_2}{\sqrt{\hat{N}}}$ as in $\sigma_2$, then HeteroChebtrunc performs similarly to NoisyChebtrunc when $N$ is small, and as $N$ grows the pre-sampling improves and its uniform error becomes closer to Algorithm \ref{alg:noisychebtrunchetero}, which is predicted by the analysis after Proposition \ref{cor:var_proportional_bound}. Finally, when the noise is very large in some region, but overall very small on average (as in $\sigma_3(x)$), then both extensions are able to significantly improve the uniform error by taking weighted samples and redistributing noise. We note that in all three cases, the pre-sampling procedure is able to provide sufficient accuracy so that HeteroChebtrunc achieves a uniform error similar to Algorithm \ref{alg:noisychebtrunchetero}.

\begin{figure}[H]
	\centering
	\centerline{\includegraphics[width=0.7\linewidth]{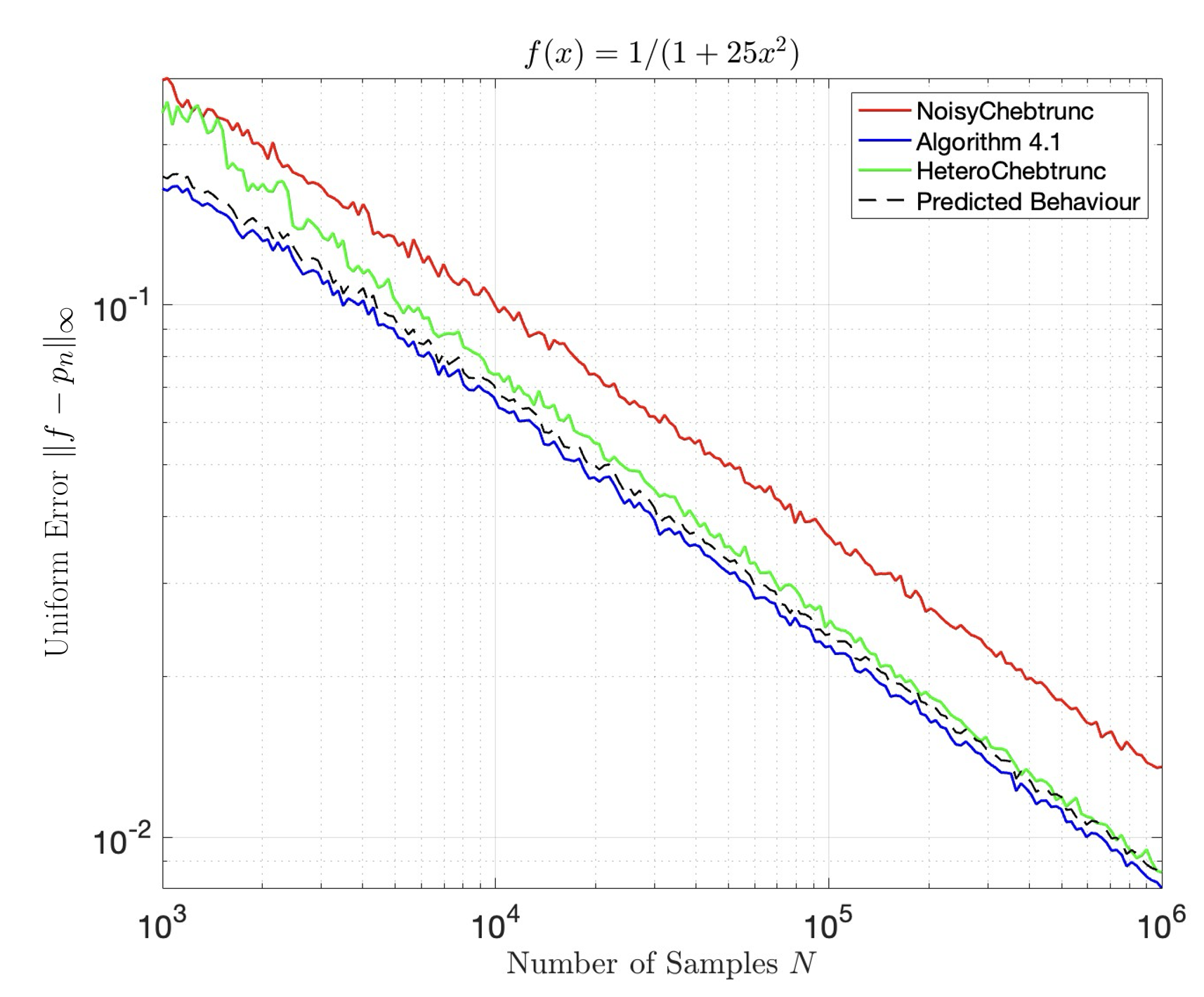}}
	\caption{Uniform error comparison with a large trial size of 500. This plot shows the uniform error $\|f-p_n\|_\infty$ of the three algorithms as $N$ varies from $10^3$ to $10^6$. The black dashed line represents $\frac{1}{\sqrt{1-r}}\|f-p_n\|$, where $p_n$ is computed using Algorithm \ref{alg:noisychebtrunchetero}, the error of HeteroChebtrunc predicted by Proposition \ref{cor:var_proportional_bound}. Each data point is averaged from 500 trials to reduce oscillations. This plot shows HeteroChebtrunc is at least as good as NoisyChebtrunc, and approaches Algorithm \ref{alg:noisychebtrunchetero} for large $N$.} 
	\label{fig:500trials}
\end{figure}

We highlight the property of HeteroChebtrunc that it has uniform error similar to NoisyChebtrunc for small $N$, and approaches the uniform error of Algorithm \ref{alg:noisychebtrunchetero} as $N$ grows and pre-sampling size $m$ increases. To this end, we run the experiment again with the Runge function and noise level $\sigma_4(x) = \begin{cases}
	10 & \text{if } x \in [0,1]\\
	0.00001 & \text{otherwise} 
\end{cases}$, and present our result in Figure \ref{fig:500trials}.

To reduce the oscillations in $\|f-p_n\|_\infty$ observed in previous experiments, each data point was averaged from 500 trials. In this example, the uniform error of NoisyChebtrunc (red) is close to that of NoisyChebtrunc when $N$ is close to $10^4$, but as $N$ increases HeteroChebtrunc has its uniform error gradually approaching Algorithm \ref{alg:noisychebtrunchetero}. We are also able to confirm that near $N=10^6$ the uniform error of HeteroChebtrunc is roughly $\frac{1}{\sqrt{1-r}}$ times the error of Algorithm \ref{alg:noisychebtrunchetero} (shown as black dashed), which is predicted by the discussions following Proposition \ref{cor:var_proportional_bound}. 

\subsubsection{Effect of Heteroskedasticity}

We illustrate how the heteroskedastic noise level $\sigma(x)$ affects the error $f-p_n$, in Figure \ref{fig:f-pn}. The graph plots $f-p_n$ against $x$ when using NoisyChebtrunc (red) and HeteroChebtrunc (green) to compute an approximant of the Runge function with $N=10^7$ and two different noise function $\sigma(x) = \begin{cases}
	1 & \text{if } x \in [0,1]\\
	0.00001 & \text{otherwise} 
\end{cases}$ and $\sigma(x) = \begin{cases}
	1 & \text{if } x \in [0.9,1]\\
	0.00001 & \text{otherwise}
\end{cases}$. 

\begin{figure}[H]
	\centering
	\centerline{\includegraphics[width=1.1\linewidth]{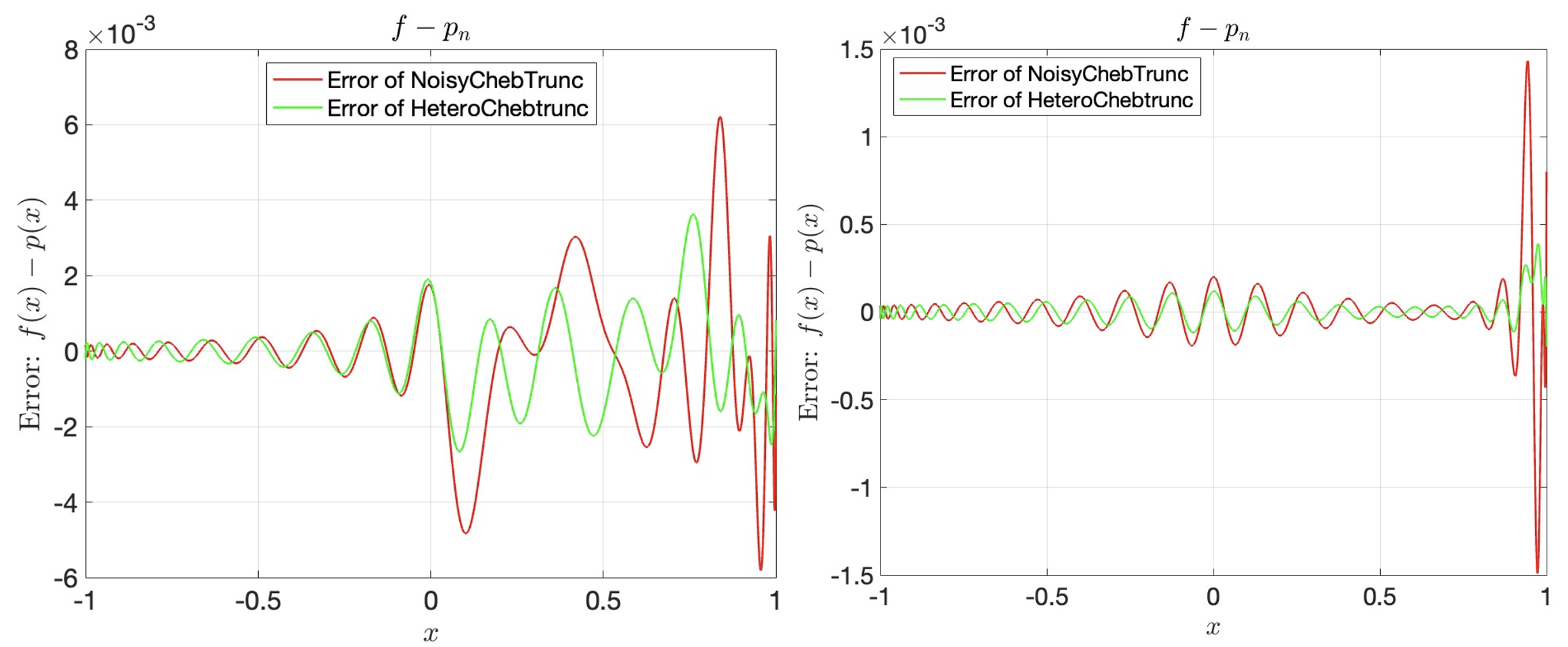}}
	\caption{Graph of $f-p_n$. Left: $\sigma(x) = \mathbbm{1}_{[0,1]} + 0.00001\mathbbm{1}_{[-1,0)}$. Right: $\sigma(x) = 10\mathbbm{1}_{[0.9,1]} + 0.00001\mathbbm{1}_{[-1,0.9]}$. Each plot is randomly selected from 100 trials. HeteroChebtrunc has error comparably more uniform and less influenced by the noise distribution than NoisyChebtrunc, thanks to the accurate pre-sampling and effective noise redistribution.}
	\label{fig:f-pn}
\end{figure}

In the first case (left), the noise is concentrated on $[0,1]$, and we see that the original NoisyChebtrunc has much higher error at this interval than on $[-1,0]$. However, the pre-sampling procedure of HeteroChebtrunc was able to detect the heteroskedasticity and take more samples on $[0,1]$, allowing it to have a more uniform error distribution and lower maximum error. This is perhaps better demonstrated in the burst noise case (right), where the high noise on $[0.9,1]$ caused the NoisyChebtrunc approximant to experience a large error near this region, but HeteroChebtrunc is able to avoid this issue, though an increase in error is still noticeable near $[0.9,1]$. This is because the decay of the noise effect $\sigma_i$ at each $x_i$ is at a rate\footnote{See earlier analysis in Section \ref{sec:noisy-hetero}.} $\frac{1}{\sqrt{k_i}}$, so it is rather slow and requires very large $k_i$ to converge given the high noise there.

\subsubsection{Sample Allocations of HeteroChebtrunc}

We next examine the number of samples allocated to each of $x_i$ in Step 3 of HeteroChebtrunc. Ideally, one would hope that the number of weighted samples taken in HeteroChebtrunc is as close to as if the noise level is known i.e. $k_i = N\frac{\sigma_i^2}{\sum \sigma_j^2}$ as in Algorithm \ref{alg:noisychebtrunchetero}. For those $x_i$ where the noise is small, and $k_i < m$, this is not possible as we must take $m$ pre-samples at every $x_i$. We therefore focus on the case where $k_i \geq m$.

Figure \ref{fig:unknown-allocation} demonstrates the sample allocation and noise variance of HeteroChebtrunc compared with Algorithm \ref{alg:noisychebtrunchetero}. The samples are taken with noise function $\sigma(x) = \begin{cases}
	1 & \text{if } x \in [0,1]\\
	0.00001 & \text{otherwise} 
\end{cases}$. The left plot shows $(x_i, k_i)$, i.e. the number of samples allocated to each Chebyshev points in total, including pre-samples. We use $N = 10^6$, $r=0.1$, and $\hat{N} = 10^3$, so $m=100$. We look at both normally (left) and uniformly distributed\footnote{as in noise follows the uniform distribution $U[-1,1]$} noise (right).

\begin{figure}[H]
	\centering
	\centerline{\includegraphics[width=1.1\linewidth]{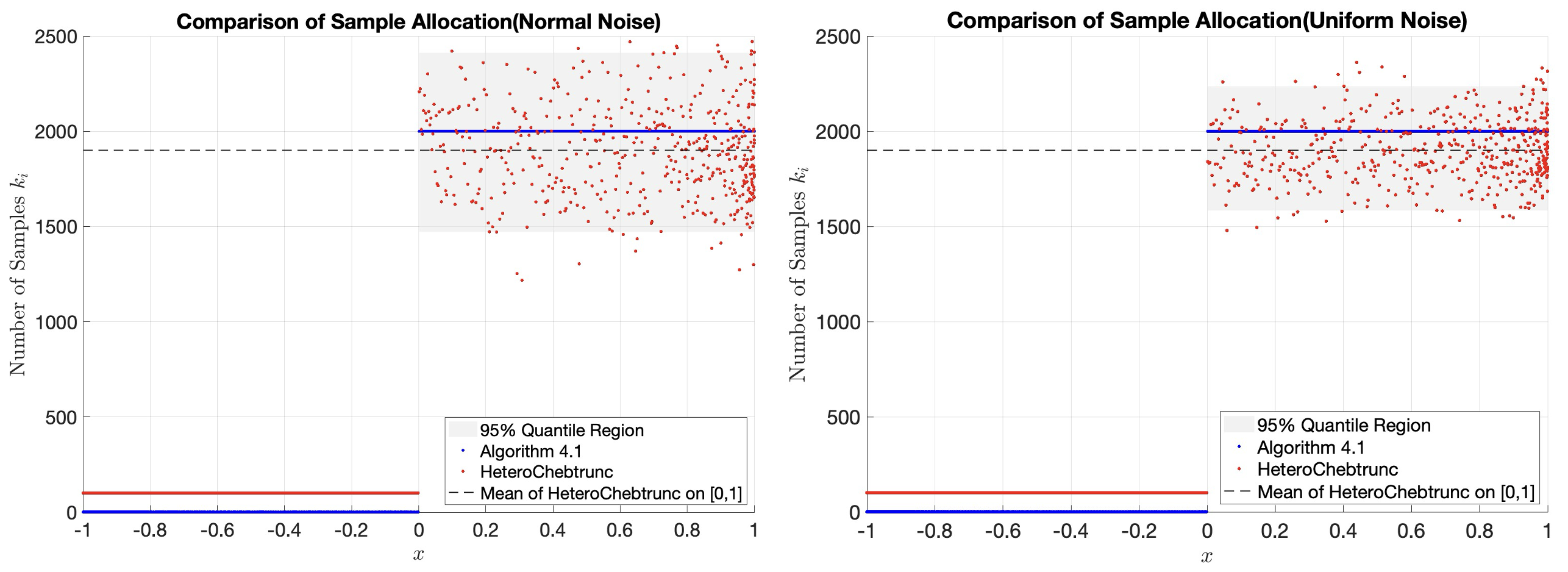}}
	\caption{Number of Samples $k_i$ allocated at each Chebyshev point $x_i$ using HeteroChebtrunc (red) and Algorithm \ref{alg:noisychebtrunchetero} (blue). Mean number of samples on $[0,1]$ is shown as dashed line, regions between 2.5\% and 97.5\% quantile are shaded. Left: normal noise; Right: uniform noise. This experiment illustrates HeteroChebtrunc approximates optimal sampling.}
	\label{fig:unknown-allocation}
\end{figure}

In HeteroChebtrunc (red)\footnote{We use red instead of green here for better visibility}, each $x_i$ receives at least $m$ samples from the pre-sampling. This is slightly inefficient as ideally, less than $m$ samples should be allocated at $x_i$ when noise is very small (e.g. $x_i\in [-1,0]$ in the plot). As a result, on noisier nodes ($x_i\in [0,1]$), the average number of samples (dashed line) on this section (note noise is uniform on this interval) is slightly lower than the optimal choice made by Algorithm \ref{alg:noisychebtrunchetero} (blue), but in general HeteroChebtrunc selects a reasonable number of pre-samples at each $x_i$, and the selection will improve as $N$ increases.

Recall Proposition \ref{cor:var_proportional_bound}, which states that for any $s$, our estimate $\frac{S_i^2}{\sum_{j=0}^{\hat{N}}S_j^2} $ lie in $ [(1- s)\frac{\sigma_i^2}{\sum_{j=0}^{\hat{N}}\sigma_j^2}, (1+ s)\frac{\sigma_i^2}{\sum_{j=0}^{\hat{N}}\sigma_j^2}]$ with probability\footnote{We ignore the $(1+\frac{1}{\sqrt{m-1}})$ term as it tends to 1.} $ 1-A_{\hat{N}}\exp(C_{s,\sigma_i}m)$ for some constant $A_{\hat{N}}$ depending on $\hat{N}$, and $C_{s,\sigma_i}$ depending on $s$ and $\sigma_i$. In order to simplify theoretical analysis, we applied some loose bounds in the proof, hence the bound there is not sharp enough to serve as an indicator for the empirical errors. Also, our bound is very general for any form of subgaussian noise, and if noise is for example normal one might be able to devise a sharper bound using other statistical techniques. This suggests that one may expect faster convergence rate from the sample variance estimator than Proposition \ref{cor:var_proportional_bound} suggests.

\subsubsection{Runtime Comparison}

To end this section, we compare the runtime of NoisyChebtrunc and HeteroChebtrunc. Recall the time complexity of HeteroChebtrunc is $O(N+\hat{N}\log \hat{N})$, which should be moderately faster than NoisyChebtrunc's $O(N\log N)$. We confirm this with our numerical experiment: 
\begin{figure}[h]
    \centering
    \includegraphics[width=0.75\linewidth]{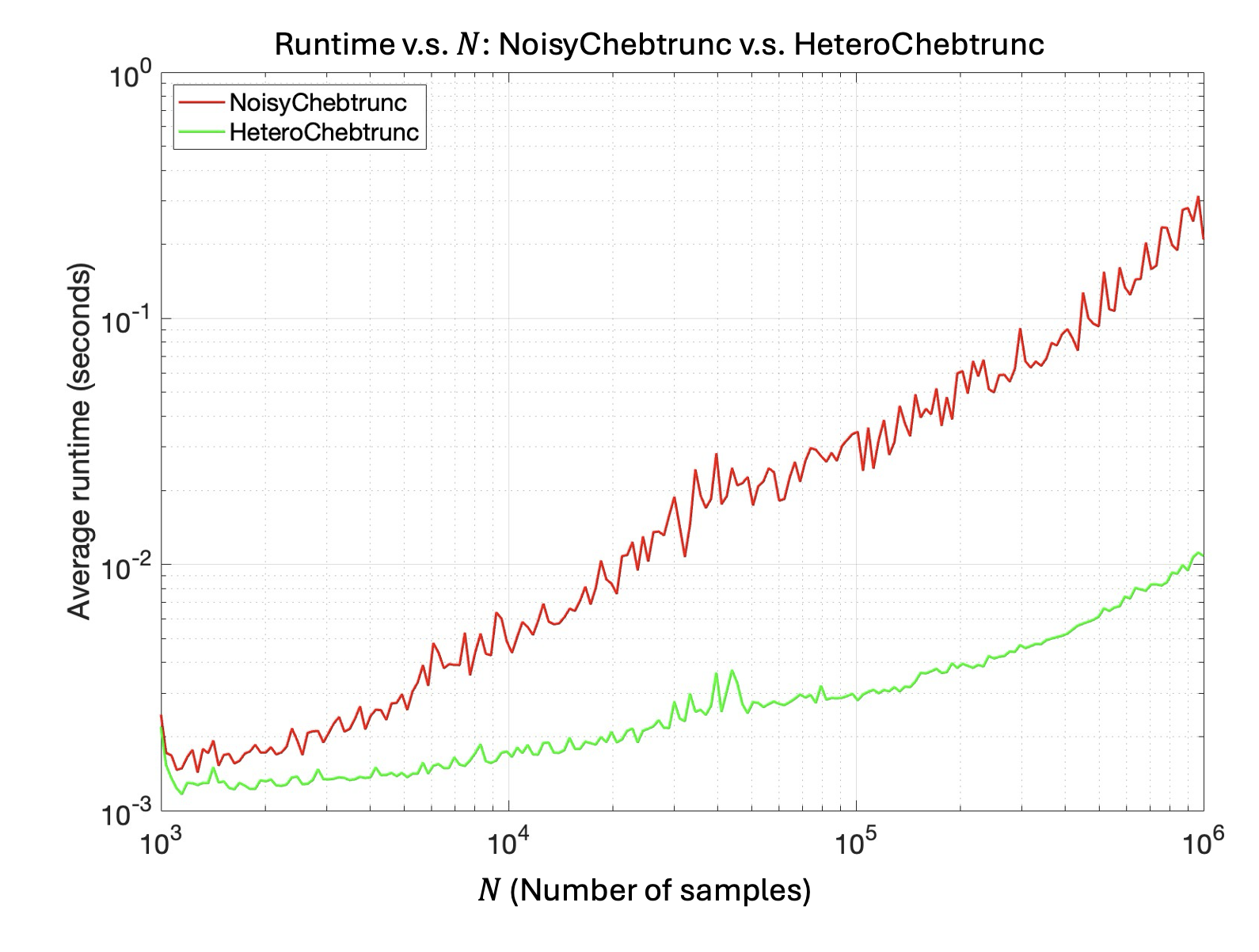}
    \caption{Runtime of NoisyChebtrunc (Red) and HeteroChebtrunc (Green) with $N $ varying from $10^3$ to $10^6$. Each data point is averaged from 50 trials. $\hat{N}$ is chosen to be $\sqrt{N} $ and $r = 0.1$. HeteroChebtrunc runs significantly faster than NoisyChebtrunc empirically, thanks to its $O(N)$ time complexity.}
    \label{fig:placeholder}. 
\end{figure}
We run NoisyChebtrunc and HeteroChebtrunc over 200 logarithmically spaced points from $10^3$ to $10^6$, and the sampling procedure used is the MATLAB \texttt{randn} function. We see significant improvement in runtime for HeteroChebtrunc comparing with NoisyChebtrunc, and the improvement increases as $N$ grows, which is expected from the difference in time complexity. 

It should be noted that in applications where sampling is expensive, e.g. one sample is a numerical evaluation of a PDE, then the difference in runtime might not be as significant. 

\section{Dependent Noise}

Given $x,y \in [-1,1]$, $x\neq y$, if the noise random variables $\epsilon_x$ and $\epsilon_y$ are dependent, then the analysis becomes much more difficult. For example, consider the extreme scenario where $\epsilon_x = \epsilon_y$ for all $x,y$, then the Chebyshev interpolant from these noisy data would simply be a vertical shift of the interpolant of $f$. Therefore, if one applies the original NoisyChebtrunc, the same noise reduction effect cannot hold. However, if we make the additional assumption that one can repeatedly collect independent samples at the same input $x_i$, which is required by HeteroChebtrunc, then a noise reduction effect can be achieved. To reiterate our assumptions, we assume that at each $x_i$, we can collect multiple independent samples from the random variable $\epsilon_{i} := \epsilon_{x_i}$, but $\epsilon_{x_i}$ and $\epsilon_{x_j}$ might not be independent. 

\begin{theorem}
	Let $\hat{p}_{\hat{N}}$ be the noisy interpolant through $\{(x_i, y_i)\}_{i=0}^{\hat{N}}$, where $y_i =f(x_i) + \epsilon_i$. Assuming that one can take multiple independent samples from each of the random variable $\epsilon_{i}$, then \[\mathbb{P}\left(\|f- \hat{p}_{\hat{N}} \|_\infty \geq \|q_{\hat{N}}\|_\infty  + t\frac{\|\bm{\sigma}\|_2}{\sqrt{\hat{N}}}\sqrt{\frac{{\hat{N}}}{N}}(\frac{2}{\pi} \log (\hat{N}+1)+1) \right) \leq  2\hat{N} \exp(-t^2).\]
	
\end{theorem}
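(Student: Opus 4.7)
The plan is to apply a standard triangle inequality decomposition. Let $\tilde{p}_{\hat{N}}$ denote the Chebyshev interpolant of $f$ at the noiseless data $\{(x_i, f(x_i))\}_{i=0}^{\hat{N}}$, so that by definition $\|f-\tilde{p}_{\hat{N}}\|_\infty = \|q_{\hat{N}}\|_\infty$. Write
\[\|f-\hat{p}_{\hat{N}}\|_\infty \leq \|f-\tilde{p}_{\hat{N}}\|_\infty + \|\tilde{p}_{\hat{N}}-\hat{p}_{\hat{N}}\|_\infty,\]
so it suffices to control the second, noise-induced term with high probability. Since $\tilde{p}_{\hat{N}}-\hat{p}_{\hat{N}}$ is itself a degree-$\hat{N}$ Chebyshev interpolant, namely of the pointwise averaged noise values $\{(x_i, -\bar{\epsilon}_i)\}_{i=0}^{\hat{N}}$, the Lebesgue constant inequality for Chebyshev points supplies the deterministic bound
\[\|\tilde{p}_{\hat{N}}-\hat{p}_{\hat{N}}\|_\infty \leq \Lambda_{\hat{N}}\,\max_{0\leq i\leq \hat{N}}|\bar{\epsilon}_i|, \qquad \Lambda_{\hat{N}} \leq \tfrac{2}{\pi}\log(\hat{N}+1)+1,\]
which produces the logarithmic factor appearing in the theorem.

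Next I would establish a uniform subgaussian tail for each $\bar{\epsilon}_i$. By hypothesis, the $k_i$ samples drawn at $x_i$ are independent, so if $\epsilon_i$ is subgaussian with parameter $\sigma_i$, then the sample average $\bar{\epsilon}_i$ is subgaussian with parameter $\sigma_i/\sqrt{k_i}$. Choosing the weighted allocation $k_i = N\sigma_i^2/\|\bm{\sigma}\|_2^2$ as in Algorithm \ref{alg:noisychebtrunchetero} collapses all these parameters to the common value $\|\bm{\sigma}\|_2/\sqrt{N}$, which is exactly the factor $\tfrac{\|\bm{\sigma}\|_2}{\sqrt{\hat{N}}}\sqrt{\hat{N}/N}$ appearing in the theorem.

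A union bound then finishes the argument. Because each $\bar{\epsilon}_i$ individually satisfies $\mathbb{P}(|\bar{\epsilon}_i|\geq t\|\bm{\sigma}\|_2/\sqrt{N}) \leq 2\exp(-t^2)$, I obtain
\[\mathbb{P}\!\left(\max_i |\bar{\epsilon}_i| \geq t\,\tfrac{\|\bm{\sigma}\|_2}{\sqrt{N}}\right) \leq 2(\hat{N}+1)\exp(-t^2),\]
and combining with the Lebesgue-constant bound yields the claimed inequality, up to the cosmetic replacement of $\hat{N}+1$ by $\hat{N}$ in the prefactor.

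The main subtlety is that $\bar{\epsilon}_i$ and $\bar{\epsilon}_j$ are generally dependent for $i\neq j$, inherited from the dependence of $\epsilon_{x_i}$ and $\epsilon_{x_j}$. However, the union bound on $\max_i|\bar{\epsilon}_i|$ requires only marginal subgaussian tails, so inter-node dependence never enters the analysis. This is precisely why the assumption of repeated independent sampling at each fixed node is the correct one: it supplies exactly the amount of independence needed to average away the noise \emph{within} each $x_i$ via concentration of the sample mean, without demanding joint independence \emph{across} the nodes.
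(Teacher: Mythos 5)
Your proposal is correct and follows essentially the same route as the paper: a triangle-inequality split through the exact interpolant, the Lebesgue-constant bound on the interpolant of the (averaged) noise values, and a union bound over the marginal subgaussian tails, with the weighted allocation equalising each parameter to $\|\bm{\sigma}\|_2/\sqrt{N}$. You are in fact slightly more explicit than the paper about why only marginal tails are needed (so inter-node dependence is harmless) and about the $\hat{N}$ versus $\hat{N}+1$ prefactor, but the argument is the same.
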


We first claim that for large $N$, the noises $\epsilon_i$ is bounded uniformly with high probability:

\begin{lemma}
	For $i =0, 1,\dots, \hat{N}$, $\epsilon_i$ subgaussian, without assuming any form of independence, 
	
	\[\mathbb{P}(|\epsilon_i| > \frac{\|\bm{\sigma}\|_2}{\sqrt{N}} t \text{ for some }i) \leq 2\hat{N} \exp(-t^2) .\]
\end{lemma}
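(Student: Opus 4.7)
The plan is to combine a marginal subgaussian tail bound with the union bound. Crucially, the union bound requires no joint independence assumption between the $\epsilon_i$'s across different $i$; we only need each $\epsilon_i$ to have a subgaussian marginal. The dependence assumption that is relaxed here is the one between samples at different Chebyshev nodes, while the averaging effect at a single node still relies on the stated assumption that independent samples can be collected at each $x_i$.

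First I would unpack what $\epsilon_i$ denotes in this dependent-noise setting. In the weighted-sampling scheme of HeteroChebtrunc, the value $y_i$ used for interpolation is the mean of $k_i$ independent samples at $x_i$, where $k_i \asymp N \sigma_i^2 / \|\bm{\sigma}\|_2^2$. Hence the effective noise $\epsilon_i$ on the data point $(x_i, y_i)$ is an average of $k_i$ i.i.d. subgaussian variables of parameter $\sigma_i$, which by the standard subgaussian sum/scaling rules is itself subgaussian with parameter
\[
\frac{\sigma_i}{\sqrt{k_i}} \;=\; \frac{\sigma_i}{\sqrt{N\,\sigma_i^2/\|\bm{\sigma}\|_2^2}} \;=\; \frac{\|\bm{\sigma}\|_2}{\sqrt{N}}.
\]
Thus every $\epsilon_i$ has the same subgaussian parameter $\|\bm{\sigma}\|_2/\sqrt{N}$, regardless of the dependence structure across the index $i$.

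Next, I would apply the subgaussian tail bound from the paper's definition to each marginal: for every $i$,
\[
\mathbb{P}\!\left(|\epsilon_i| > \tfrac{\|\bm{\sigma}\|_2}{\sqrt{N}}\, t\right) \;\leq\; 2\exp(-t^2).
\]
Finally, the union bound over $i = 0, 1, \dots, \hat{N}$ gives
\[
\mathbb{P}\!\left(\exists\, i: |\epsilon_i| > \tfrac{\|\bm{\sigma}\|_2}{\sqrt{N}}\, t\right) \;\leq\; 2(\hat{N}+1)\exp(-t^2),
\]
which absorbs into the claimed $2\hat{N}\exp(-t^2)$ bound (up to an inessential constant, or by simply noting $\hat{N}+1 \leq 2\hat{N}$ for $\hat{N}\geq 1$).

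There isn't really a hard step here — the subtlety is conceptual rather than technical: one must be clear that the lemma's "without assuming any form of independence" refers to inter-node dependence of the $\epsilon_i$'s, while the intra-node averaging that produces the parameter $\|\bm{\sigma}\|_2/\sqrt{N}$ still uses the repeated-sampling assumption stated in the theorem above. Once that distinction is made, the union bound is unconditional and the proof is essentially immediate.
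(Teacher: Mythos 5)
Your proof is correct and takes essentially the same route as the paper: a marginal subgaussian tail bound $\mathbb{P}(|\epsilon_i| > \tfrac{\|\bm{\sigma}\|_2}{\sqrt{N}}t) \leq 2\exp(-t^2)$ followed by the union bound, which needs no independence across $i$. You are in fact slightly more careful than the paper in two places: you explain why each $\epsilon_i$ carries the common parameter $\|\bm{\sigma}\|_2/\sqrt{N}$ (via averaging the $k_i \asymp N\sigma_i^2/\|\bm{\sigma}\|_2^2$ independent samples at each node), and you flag that the union over $\hat{N}+1$ indices really gives $2(\hat{N}+1)\exp(-t^2)$, a counting slip the paper's own proof silently makes.
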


\begin{proof}
	This is directly from the definition of subgaussian random variable:
	
	\begin{align*}
		\mathbb{P}(|\epsilon_i| > \frac{\|\bm{\sigma}\|_2}{\sqrt{N}} t \text{ for some }i) & = \mathbb{P}(\bigcup_{i=0}^{\hat{N}}|\epsilon_i| > \frac{\|\bm{\sigma}\|_2}{\sqrt{N}} t )\\
		&\leq \sum_{i=0}^{\hat{N}} \mathbb{P}(|\epsilon_i| > \frac{\|\bm{\sigma}\|_2}{\sqrt{N}} t )\\
		&\leq 2\hat{N} \exp(-t^2).
	\end{align*}
\end{proof}

Now let $q_n = f - \bar{p}_n$ be the error of the exact Chebyshev interpolant $\bar{p}_n$ of $n+1$ points, so $\|q_n\|_\infty $ achieves spectral convergence to 0. We are going to find the error of our noisy interpolant using the error of exact interpolant. The key idea is that if all of $\epsilon_i$ are bounded, then the well-conditionedness of Chebyshev interpolation implies that the noisy interpolant $\hat{p}_n$ cannot deviate from $\bar{p}_n$ too much.

\begin{proof}(of theorem)
	Let $\rho = \frac{\|\bm{\sigma}\|_2}{\sqrt{N}}$. If each of $|\epsilon_i| \leq \rho t$, then we note $\bar{p}_{\hat{N}} - \hat{p}_{\hat{N}}$ is a Chebyshev interpolant through the points $(x_i, \epsilon_i)$, so the Lebesgue constant $\Lambda_{\hat{N}}$ of Chebyshev interpolation gives
	
	\[\|\bar{p}_{\hat{N}} - \hat{p}_{\hat{N}}\| \leq \rho t(\frac{2}{\pi} \log (\hat{N}+1) +1). \]
	
	Hence $\|f-\hat{p}_{\hat{N}} \| \leq \|q_{\hat{N}}\| + \|\bar{p}_{\hat{N}} - \hat{p}_{\hat{N}}\| \leq \|q_{\hat{N}}\|+ \rho t(\frac{2}{\pi} \log (\hat{N}+1) +1).$
	
	Therefore, if $\|f-\hat{p}_{\hat{N}}\|_\infty $ is greater than RHS above, then at least one of $|\epsilon_i| > \rho t$, the result follows from the previous lemma.
	
\end{proof}

Although our theorem concerns the Chebyshev interpolant $\hat{p}_{\hat{N}}$, not the approximant after truncation using Mallow's $C_p$, we expect the final approximant from HeteroChebtrunc to satisfy a similar error bound.

\section{Discussions}

HeteroChebtrunc improves the accuracy and time complexity of NoisyChebtrunc when the noises are no longer identical, under the additional assumption that one can collect independent samples at the same $x$, We also showed that HeteroChebtrunc achieves a similar uniform error bound if the noise random variables are dependent, but the aforementioned assumption makes this result less practical. Whether the same results on both accuracy and dependence can be achieved without this assumption is a natural extension of this work.

We also have yet to find a lower bound for the choice of $\hat{N}$. Specifically, such a bound might depend on the function $f$ as Chebyshev interpolation converges at different rates for different $f$. Our experiments use $\hat{N} = \lfloor \sqrt{N}\rfloor$, which is large enough for most implementations, and we have seen in Section \ref{sec:nhat-choice} that $\hat{N}$ does not substantially impact uniform error as long as it is moderately large. But ideally, one would like to choose $\hat{N}$ to be as small as possible, because in HeteroChebtrunc a smaller $\hat{N}$ allows more samples $m = r\frac{N}{\hat{N}}$ to be allocated to each of the $x_i$, improving the accuracy of the pre-sampling and hence the overall procedure. Hence, a sharp lower bound for $\hat{N}$ would be useful for the implementation of HeteroChebtrunc. 

We have mainly worked on the case where the sample points can be chosen to be Chebyshev. Another important direction would be to generalise this to cases where $x_i$ cannot be freely selected, such as equispaced points.

Heteroskedastic noise is prevalent in many applications. For example, in Bayesian optimisation, black-box approximation of a noisy function is an important but often difficult task, particularly if noise is heteroskedastic \cite{black-box, black-box-2,black-box-3}. Another example is derivative estimation as mentioned already in \cite{yuji}. An adaptation and analysis of HeteroChebtrunc in these fields are left for future work.

\bmhead{Acknowledgements}

We thank Rebecca Lewis for the valuable conversations regarding concentration inequalities.

\bibliography{main}

\end{document}